%% file: Delta-biderivationsOfVirasoroRelatedAlgebras.tex
\documentclass[12pt, reqno]{amsart}
\usepackage{amssymb,dsfont}
\usepackage{eucal}
\usepackage{amsmath}
\usepackage{amscd}
\usepackage[dvips]{color}
\usepackage{multicol}
\usepackage[all]{xy}           
\usepackage{graphicx}
\usepackage{color}
\usepackage{colordvi}
\usepackage{xspace}
\usepackage{txfonts}
\usepackage{lscape}
\usepackage{tikz} 
\usepackage{bm}

\numberwithin{equation}{section}
\usepackage[shortlabels]{enumitem}
\usepackage{ifpdf}
\ifpdf
 \usepackage[colorlinks,final,backref=page,hyperindex]{hyperref}
\else
\fi
\usepackage{tikz}
\usepackage[active]{srcltx}

\makeatletter

\newtheorem{theorem}{Theorem}[section]
\newtheorem{definition}[theorem]{Definition}
\newtheorem{lemma}[theorem]{Lemma}
\newtheorem{proposition}[theorem]{Proposition}
\newtheorem{remark}[theorem]{Remark}

\numberwithin{equation}{section}

\input{newcommands}

\begin{document}
\title[$\delta$-biderivations of Virasoro related algebras]
{$\delta$-biderivations of Virasoro related algebras}
  \author{Chengkang Xu}
  \address{C. Xu: School of Mathematical Sciences, Shangrao Normal Collage, Shangrao, Jiangxi, P. R. China\\ Email: xiaoxiongxu@126.com}

\date{}
 \keywords{Witt algebra, Virasoro algebra, $\delta$-derivation, biderivation, $\delta$-biderivation}
  \subjclass[2020]{17B10, 17B65, 17B68}
\maketitle

\begin{abstract}
 We determine all $\delta$-biderivations for the Witt algebra,
 the Virasoro algebra, the $W$-algebras $W(a,b)$ and their universal central extensions $\widetilde W(a,b)$, and then give some applications.
\end{abstract}

\section{Introduction}

The study of biderivations arises from associative ring theory.
The notion of a (skew-symmetric) biderivation is a valid way to study commuting linear maps of associative rings (see \cite{Bre}).
In \cite{WYC}, the authors gave the definition of a biderivation for Lie algebra,
and studied skew-symmetric biderivations of parabolic subalgebras of finite dimensional simple Lie algebras.
Since then, the studies on biderivations of Lie algebras have attracted many attentions.
Skew-symmetric biderivations were determined
for the Schr\"{o}dinger-Virasoro algebra \cite{WY},
generalized Witt algebras \cite{Chen},
the $W$-algebra $W(a,b)$ \cite{HWX},
the affine-Virasoro algebra of type $A_1$ \cite{WuY},
Kac-Moody algebras \cite{CW},
a loop Schr\"{o}dinger-Witt algebra \cite{LMC},
the Witt algebra and $W(2,2)$ \cite{Tang2017},
a block Lie algebra $\mathcal{B}(q)$ \cite{LGZ},
the twisted Heisenberg-Virasoro algebra \cite{TL},
the mirror Heisenberg-Virasoro algebra \cite{LC}, and so on.
It is also worth to mention that Br$\check{\text{e}}$sar and Zhao in \cite{BZ} introduced a new approach to study skew-symmetric biderivations of Lie algebras using the notion of centroid.
Commuting linear maps were also determined for the aforementioned Lie algebras separately.

Symmetric biderivations are related to commutative post-Lie algebras.
There are also many studies on symmetric biderivations for Lie algebras.
For example, finite-dimensional complex simple Lie algebras \cite{Tang1},
the Witt algebra and the $W$-algebra $W(2,2)$ \cite{Tang2017},
the block Lie algebra $\mathcal{B}(q)$ \cite{LGZ},
the twisted Heisenberg-Virasoro algebra \cite{TL},
the mirror Heisenberg-Virasoro algebra \cite{LC},
the $W$-algebras $W(a,b)$ \cite{Tang2},
higher rank Witt algebras \cite{TY}.
Among the aforementioned Lie algebras,
commutative post-Lie algebra structures were determined
for the Witt algebra, $W(2,2)$, the twisted Heisenberg-Virasoro algebra,
the $W$-algebras $W(a,b)$ and higher rank Witt algebras.

As a generalization of biderivation,
the notion of $\dt$-biderivation was introduced in \cite{YH}, where $\delta$ is an arbitrary complex number.
This notion also generalizes the notion of $\dt$-derivation,
which was first studied by Filippov in \cite{Fi}.
Especially, the $\frac12$-derivation has deep connection with transposed Poisson algebra, which was introduced in \cite{BBGW} as a dual notion of the Poisson algebra.
The authors in \cite{YH} also determined all $\frac12$-derivations of the twisted Heisenberg-Virasoro algebra, the Schr\"odinger-Virasoro algebra,
the extended Schr\"odinger-Virasoro algebra and the twisted Schr\"odinger-Virasoro algebra,
and used them to determine transposed Poisson algebra structures on these algebras.
However, they did not go deeper into computing $\dt$-biderivations of Lie algebras.
Transposed Poisson algebra structures over some Lie algebras were determined recently,
see \cite{KK1, KK2, KLZ, YT} and references therein.
The studies on transposed Poisson algebra structures over Lie algebras in the aforementioned references are all by the means of $\frac12$-derivations.
But the connection between $\frac12$-derivations and transposed Poisson algebra structures on Lie algebras is not direct, but actually through $\frac12$-biderivations.
Therefore, we introduce a notion, called transposed $\dt$-Poisson algebra structure, for Lie algebras, as a generalization of transposed Poisson algebra structure in the last section, and also as an application of $\dt$-biderivations.

In this paper, we determine all $\dt$-biderivations for the Witt algebra,
the Virasoro algebra, the $W$-algebras $\wab$
and their universal central extensions $\twab$.
Let us now briefly describe the organization of this paper.
In Section 2, we recall the definitions of these algebras, and the notions of $\dt$-derivation and $\dt$-biderivation.
In Section 3, we present some basic properties of $\dt$-biderivations.
Section 4 is dedicated to determine $\dt$-biderivations of the Witt algebra and the Virasoro algebra,
and Section 5 is to determine $\dt$-biderivations of the algebras $\wab$ and $\twab$ for any complex numbers $a,b$.
In the last section we provide some applications of $\dt$-biderivations to determine commuting linear maps, commutative post-Lie algebra structures and transposed $\dt$-Poisson algebra structures on the algebras mentioned above.

Throughout this paper, the symbols $\Z$ and $\C$ are to denote the sets of integers and complex numbers respectively.
All vector spaces are assumed to be over $\C$,
and for any Lie algebra $\g$ we denote by $\g'$ the derived subalgebra of $\g$.
Let $G$ be an additive group and $V$ be a $G$-graded space (or Lie algebra) such that
$V=\bigoplus_{g\in G}V_g$.
We call $V_g$ a homogeneous subspace of degree $g$,
and call nonzero elements in $V_g$ are of degree $g$.

\section{Preliminaries}
In this section, we recall the Lie algebras we study in this paper,
the notions of $\dt$-derivation and $\dt$-biderivation,
and some basic results.

\begin{definition}
The \textbf{Witt algebra} is a complex Lie algebra $\W$ with a basis
$\left\{L_{n} \mid  n\in\Z \right\}$
subjecting to the commutation relation
\[[L_m,L_n]=(m-n)L_{m+n}\quad\text{ for any }m,n\in\Z.\]
The \textbf{Virasoro algebra} $\V$ is the universal central extension of $\W$.
Equivalently, $\V$ has a basis
$\left\{L_{n}, C_0 \mid  n\in\Z \right\}$
with commutation relation
\[[L_m,L_n]=(m-n)L_{m+n}+\frac1{12}(m^3-m)C_0\dt_{m+n,0}
   \quad\text{ for any }m,n\in\Z.\]
\end{definition}

The $W$-algebras are tensor products of the Witt algebra and its modules of intermediate series.

\begin{definition}
 Let $a,b\in\C$, the \textbf{$W$-algebra} $\wab$ is a Lie algebra with a $\C$-basis $\{L_n,I_n\mid n\in\Z\}$ satisfying Lie brackets
 \[[L_i,L_j]=(i-j)L_{i+j},\quad [L_i,I_j]=-(a+j+bi)I_{i+j},\quad [I_i,I_j]=0
 \quad\text{for }i,j\in\Z.\]
\end{definition}

Since $\wab\cong W(a+k,b)$ for any $k\in\Z$,
we always assume $0\le a<1$ in this paper.
Note that $\wab$ is perfect if and only if $(a,b)\ne(0,1)$.

The universal central extension $\twab$ of $\wab$ was determined in \cite{GJP}.

\begin{theorem}\label{thm:deftwab}
 The algebra $\twab$ satisfies the following Lie brackets
 \begin{align*}
  &[L_m,L_n]=(m-n)L_{m+n}+\frac{m^3-m}{12}C_0\dt_{m+n,0},\\
  &[L_m,I_n]=-(a+n+bm)I_{m+n}+\dt_{m+n,0}\dt_{a,0}\left(
   (m^2+m)C_1^{0}\dt_{b,0}+(mC_1^{1}+C_2^{1})\dt_{b,1}+
    \frac{m^3-m}{12}C_1^{-1}\dt_{b,-1}
  \right),\\
  &[I_m,I_n]=mC_2^0\dt_{m+n,0}\dt_{a,0}\dt_{b,0}
     +(2m+1)C_2^{\frac12,0}\dt_{m+n+1,0}\dt_{a,\frac12}\dt_{b,0},
 \end{align*}
 where $C_0,C_1^{0},C_1^{1},C_2^{1},C_1^{-1},C_2^0,C_2^{\frac12,0}$ are central elements.
\end{theorem}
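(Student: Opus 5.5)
The plan is to compute $H^2(\wab,\C)$ for the perfect cases via $2$-cocycles. When $(a,b)=(0,1)$ the algebra is not perfect, and the statement should be read as describing the maximal central extension of the derived algebra. In all cases the universal central extension of a perfect Lie algebra is $\wab\oplus H^2(\wab,\C)^*$, with bracket twisted by a basis of cocycle classes. So it suffices to classify $2$-cocycles $\psi:\wab\times\wab\to\C$ modulo coboundaries $\psi(x,y)=f([x,y])$.

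The first step is a grading reduction. Since $\mathrm{ad}L_0$ acts diagonally with eigenvalue $-n$ on $L_n$ and $-(a+n)$ on $I_n$, the standard argument applies: evaluate the cocycle identity on the triple $(L_0,x,y)$ and subtract a suitable coboundary. This lets one assume $\psi$ has degree zero, i.e. $\psi(X_m,Y_n)=0$ unless the total $L_0$-weight vanishes. Consequently $\psi(L_m,L_n)$ is supported on $m+n=0$. Next, $\psi(L_m,I_n)$ is supported on $m+n+a=0$, which forces $a=0$. Finally, $\psi(I_m,I_n)$ is supported on $m+n+2a=0$, forcing $a=0$ or $a=\tfrac12$ (with $m+n=-1$ in the latter case).

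Second, restricted to the Witt subalgebra, $\psi(L_m,L_n)$ is a Witt $2$-cocycle, so by the classical computation it is the Virasoro cocycle $\frac{m^3-m}{12}\delta_{m+n,0}$ up to coboundary. This yields $C_0$.

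Third, for the mixed part with $a=0$, set $g(m)=\psi(L_m,I_{-m})$. The cocycle identity on $(L_i,L_j,I_k)$ with $i+j+k=0$ gives the functional equation
\[(i-j)g(i+j)=-(k+bj)\,g(-i)\dots\]
More precisely, it is a linear recursion in $g$ with coefficients involving $b$. This is exactly the computation of $H^1$-type invariants of the Witt module of intermediate series $I$ with $a=0$. After the coboundary modification $g(m)\mapsto g(m)+(\text{multiple of }(m b +\text{const}))\,f(I_0)$, one expects solutions as follows:
\begin{itemize}
\item[(i)] only for $b\in\{0,1,-1\}$ does a nontrivial solution exist;
\item[(ii)] for $b=0$ it is spanned by $m^2+m$;
\item[(iii)] for $b=-1$ it is spanned by $\frac{m^3-m}{12}$;
\item[(iv)] for $b=1$ there are two solutions, $m$ and $1$, which survive because in $W(0,1)$ the element $I_0$ is not in the derived algebra.
\end{itemize}
I plan to solve the recursion by plugging in $i=1,-1,2$ to express $g$ through $g(0),g(\pm1),g(2)$, and then checking consistency, which yields polynomial constraints on $b$.

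Fourth, for $\psi(I_m,I_n)$, the identity on $(L_i,I_j,I_k)$ gives
\[(a+j+bi)h(i+j,k)+(a+k+bi)h(j,i+k)=0,\]
where $h$ is skew. Solving this shows nonzero solutions exist only for $b=0$: the solution is $m\delta_{m+n,0}$ when $a=0$, and $(2m+1)\delta_{m+n+1,0}$ when $a=\tfrac12$. The main obstacle I expect is the careful case analysis in steps three and four. One must track exactly which solutions are coboundaries, particularly at the degenerate values $b\in\{0,\pm1\}$, and also check that no mixed constraints arise from the triple $(I,I,I)$, which is trivially satisfied since $[I,I]=0$. Since the statement is a citation of \cite{GJP}, I would finally verify that the listed brackets satisfy Jacobi, which amounts to checking that each of the listed cocycles indeed satisfies the above equations.
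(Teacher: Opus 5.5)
\textbf{Step four is false, and it is the step the statement depends on.} The paper gives no proof of this theorem; it quotes it from the literature. So the question is whether your cocycle computation establishes it. Steps one to three follow the standard route and reach the right conclusions. Step four, however, asserts that the $I$--$I$ equation has nonzero solutions only for $b=0$, and this is false.

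Take $a=\tfrac12$, $b=1$ and $h(m,n)=\delta_{m+n+1,0}/(2m+1)$. This $h$ is skew, and it solves your equation $(a+j+bi)h(i+j,k)+(a+k+bi)h(j,i+k)=0$. Indeed, when $i+j+k=-1$ the first term is $(\tfrac12+i+j)/(2i+2j+1)=\tfrac12$ and the second is $-(j+\tfrac12)/(2j+1)=-\tfrac12$. Since $[I_m,I_n]=0$, coboundaries vanish on $I\times I$, so this is a nontrivial class in $H^2(W(\tfrac12,1),\mathbb{C})$.

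The structural reason is an isomorphism. In $W(\tfrac12,0)$ put $J_j=(j+\tfrac12)^{-1}I_j$. Then $[L_i,J_j]=-(\tfrac12+j+i)J_{i+j}$, so $W(\tfrac12,1)\cong W(\tfrac12,0)$. Under this isomorphism the $C_2^{\frac12,0}$-cocycle $(2m+1)\delta_{m+n+1,0}$ becomes $-4\delta_{m+n+1,0}/(2m+1)$. The correct replacement for your claim is: for $a=\tfrac12$ the $I$--$I$ equation has nonzero solutions exactly when $b\in\{0,1\}$, and for $a=0$ only when $b=0$.

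So step four cannot be completed. In fact the displayed brackets are wrong at $(a,b)=(\tfrac12,1)$. They give only $C_0$ there, but $C_0$ and $C_2^{\frac12,0}$ at $(\tfrac12,0)$, although the two algebras are isomorphic. The correct statement adds a term $\frac{1}{2m+1}C\,\delta_{m+n+1,0}\delta_{a,\frac12}\delta_{b,1}$ to $[I_m,I_n]$, with $C$ a new central element. Your closing Jacobi check only shows that the displayed extension is \emph{a} central extension. Universality rests on the exhaustiveness of steps two to four, and that is exactly what fails here.

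\textbf{Smaller points.}
\begin{itemize}
\item In step three, the identity on $(L_i,L_j,I_k)$ with $k=-i-j$ is $(i-j)g(i+j)=-(k+bj)g(i)+(k+bi)g(j)$; your version has $g(-i)$ in place of $g(i)$.
\item The coboundary contribution is $f([L_m,I_{-m}])=(1-b)m\,f(I_0)$, not a multiple of $mb+\text{const}$. This is why $g=m$ is trivial for $b\neq1$ and nothing can be removed when $b=1$.
\item Your list (i)--(iv) is correct, but it is only asserted. The recursion still has to be solved.
\item For $(a,b)=(0,1)$, the brackets describe the extension of all of $W(0,1)$ by $H^2(W(0,1),\mathbb{C})^*$, not an extension of the derived algebra. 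Note that $[L_0,I_0]=C_2^1$ with $I_0\notin W(0,1)'$. Your steps in fact compute $H^2$ of $W(0,1)$ itself, which is the right object.
\end{itemize}
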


Denote by $\mathfrak C$ the center of $\twab$.
Note that $\twab$ is perfect unless $(a,b)=(0,1)$.

\begin{definition}
 Let $\dt\in\C$ and let $\L$ be a Lie algebra.
 A linear map $f:\L\longrightarrow\L$ is called a \textbf{$\dt$-derivation} if
 \[\dt[f(x),y]+\dt[x,f(y)]=f([x,y])\quad\text{ for any }x,y\in\L.\]
\end{definition}

Clearly, when $\dt=1$, the notion of $\dt$-derivation is the ordinary derivation.

Denote by $\derdt\L$ the space of $\dt$-derivations of $\L$ for a fixed $\dt$.
For simplicity we write $\der\L=\mathfrak{Der}_1(\L)$.
All $\dt$-derivations of the Witt algebra $\W$ and the Virasoro algebra $\V$ were determined in \cite{KKS}.

\begin{theorem}[\cite{KKS}]\label{thm:dtderW}
 (1) $\der\W=\spanc{\ad L_j\mid j\in\Z}$.\\
 (2) $\mathcal Der_{\frac12}(\W)=\spanc{\vf_j\mid j\in\Z}$, where $\vf_j(L_i)=L_{i+j}$.\\
 (3) $\derdt\W=0$ if $\dt\neq\frac12,1$.
\end{theorem}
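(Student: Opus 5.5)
The plan is to exploit the $\Z$-grading of $\W$ given by $\deg L_n=n$, which is the eigenspace decomposition of $\ad L_0$. Let $f$ be a $\dt$-derivation and decompose it into homogeneous components $f=\sum_{j}f_j$ with $f_j(L_i)=c_j(i)L_{i+j}$. Because the defining identity $\dt[f(x),y]+\dt[x,f(y)]=f([x,y])$ is homogeneous, each $f_j$ is again a $\dt$-derivation. On the basis it becomes
\begin{equation*}
\dt\bigl((m+j-n)c_j(m)+(m-n-j)c_j(n)\bigr)=(m-n)c_j(m+n)\quad\text{for all } m,n\in\Z .
\end{equation*}
The task then reduces to solving this functional equation for each fixed $j$ and each $\dt$.

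First I take $n=0$, which gives $\dt(m+j)c_j(m)+\dt(m-j)c_j(0)=m\,c_j(m)$, that is, $\bigl((\dt-1)m+\dt j\bigr)c_j(m)=-\dt(m-j)c_j(0)$. The case $\dt=0$ is immediate, since then $f([x,y])=0$ and $\W$ is perfect, so $f=0$. For $\dt\ne 0,1$ this relation determines $c_j(m)$ as a rational function of $m$ times $c_j(0)$, except at the single $m$ where $(\dt-1)m+\dt j=0$. Substituting this expression back, for instance with $n=1$ or $n=-m$, I expect a polynomial identity in $m$. That identity should force either $c_j\equiv 0$ or $\dt=\tfrac12$; in the latter case the formula gives $c_j(m)=c_j(0)$ for all $m$, which is exactly $\vf_j$. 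The possible exceptional value of $m$ is handled separately by using other pairs $(m,n)$ whose sum equals that $m$.

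For $\dt=1$ the relation becomes $j\,c_j(m)=-(m-j)c_j(0)$. When $j\ne0$ this gives $c_j(m)=(j-m)c_j(0)/j$, which is proportional to $\ad L_j$. When $j=0$ I would use $m=1,n=-1$ and $m=2,n=-1$ to show that $c_0$ is linear in $m$, so $f_0$ is a multiple of $\ad L_0$ once $c_0(0)=0$ is derived. Finally I have to pass from the homogeneous components back to $f$. The infinite sum $\sum_j\alpha_j\ad L_j$ must be finite because $f(L_0)=\sum_j\alpha_j(-j)L_j$ (respectively $\sum_j c_jL_j$ for $\dt=\tfrac12$) is a finite sum. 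This finiteness gives the spans stated in (1) and (2).

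The main obstacle is the case analysis for generic $\dt$. I have to show that the rational expression is compatible with the full equation only when $\dt=\tfrac12$ or $c_j=0$, while also treating the boundary values of $m$ and the small cases such as $j=0$ or $(\dt-1)m+\dt j=0$ carefully.
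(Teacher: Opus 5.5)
The paper gives no proof of this theorem; it is quoted from \cite{KKS}, so your argument has to stand on its own. Its overall structure is sound. The homogeneous components $f_j$ of a $\delta$-derivation are again $\delta$-derivations, and your functional equation and its $n=0$ specialization are correct. The cases $\delta=0$ and $\delta=1$ go through; for $j=0$, $\delta=1$ you need an induction from $c_0(m+n)=c_0(m)+c_0(n)$, $m\neq n$, not just two pairs. Finiteness of $f(L_0)$ correctly reassembles $f$ from its components.

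The gap is the step that carries all the content of (2) and (3). You only predict that back-substitution ``should force'' $c_j\equiv0$ or $\delta=\frac12$, and one of the two substitutions you propose does not do this. Insert $c_j(m)=\frac{-\delta(m-j)}{(\delta-1)m+\delta j}\,c_j(0)$ into the relation with $n=-m$ and clear denominators. What remains is only
\[(\delta-1)(2\delta-1)(\delta+1)\,m^2\,c_j(0)=0,\]
so $\delta=-1$ survives. Concretely, take $\delta=-1$, $j=1$. The function $c_1(m)=\frac{1-m}{2m+1}\,c$ satisfies both the $n=0$ and the $n=-m$ relations for every $m$. Yet at $(m,n)=(1,2)$ the two sides of the relation are $-\frac25c$ and $\frac27c$. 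The choice $n=1$ does work: if $(\delta-1)+\delta j\neq0$, comparing $m^3$-coefficients gives $(\delta-1)^2(2\delta-1)c_j(0)=0$. But this has to be carried out, not anticipated.

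Here is a short way to close the gap that also makes the exceptional indices irrelevant. For $\delta\neq0,1$, the $n=0$ relation gives $c_j(m)\to\frac{\delta}{1-\delta}\,c_j(0)$ as $|m|\to\infty$. Fix any $n$, divide the general relation by $m$, and let $m\to\infty$. This yields $\delta\bigl(\frac{\delta}{1-\delta}c_j(0)+c_j(n)\bigr)=\frac{\delta}{1-\delta}c_j(0)$, i.e.\ $c_j(n)=c_j(0)$ for every $n\in\mathbb{Z}$. A constant $c_j$ satisfies the relation if and only if $(2\delta-1)(m-n)\,c_j(0)=0$ for all $m,n$. Hence $f_j=c_j(0)\varphi_j$ when $\delta=\frac12$, and $f_j=0$ otherwise, for every $j$ including $j=0$.
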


Now we recall the notion of $\dt$-biderivation from \cite{YH}.

\begin{definition}
 Let $\dt\in\C$ and let $\L$ be a Lie algebra.
 A bilinear map $f:\L\times\L\longrightarrow\L$ is called a \textbf{$\dt$-biderivation} of $\L$ if
 \begin{align}
   &\dt[x,f(y,z)]+\dt[f(x,z),y]=f([x,y],z),\label{eq:defdbd1}\\
   &\dt[f(x,y),z]+\dt[y,f(x,z)]=f(x,[y,z])\label{eq:defdbd2}
 \end{align}
 for any $x,y,z\in\L$.
 When $\dt=1$, we simply call $f$ a \textbf{biderivation}.
 A $\dt$-biderivation of $\L$ is called \textbf{central} if $f(x,y)\in Z(\L)$ the center of $\L$,
 called \textbf{symmetric} if $f(x,y)=f(y,x)$,
 and called \textbf{skew-symmetric} if $f(x,y)=-f(y,x)$ for all $x,y\in\L$.
\end{definition}

Denote by $\dbd\L$ the space of $\dt$-biderivations of $\L$ for a fixed $\dt$.
Let $f$ be a $\delta$-biderivation of $\L$, then for any $x\in\L$,
the linear maps $f(x,\cdot)$ and $f(\cdot,x)$ are $\delta$-derivations of $\L$.

\section{Basic properties of $\dt$-biderivations}

In this section we give some basic properties of $\dt$-biderivations over an arbitrary Lie algebra.

\begin{proposition}\label{prop:onecenter}
 Let $\L$ be a Lie algebra with center $Z(\L)$, and $f$ a $\delta$-biderivation of $\L$.\\
 (1) If $\dt\ne0$, then $f(\al,y)=f(y,\al)\in Z(\L)$ for any $\al\in Z(\L), y\in\L$.\\
 (2) $f(\al,y)=f(y,\al)=0$ for any $\al\in Z(\L), y\in[\L,\L]$.\\
 (3) If $\L$ is perfect, then $f(\al,y)=f(y,\al)=0$ for any $\al\in Z(\L), y\in\L$.
\end{proposition}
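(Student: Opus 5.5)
The plan is to specialize the two defining identities \eqref{eq:defdbd1} and \eqref{eq:defdbd2} so that a central element $\al$ sits in a slot where it is bracketed with something. Every term containing $[\al,\cdot]$ or $[\cdot,\al]$ then vanishes, and what remains gives the claims directly. I will read the conclusion ``$f(\al,y)=f(y,\al)\in Z(\L)$'' in (1) as saying that \emph{both} $f(\al,y)$ and $f(y,\al)$ lie in $Z(\L)$. An actual equality $f(\al,y)=f(y,\al)$ cannot hold for arbitrary $y$. For instance, on a $2$-dimensional abelian Lie algebra every bilinear map is a $\dt$-biderivation, so one may take $f(e_1,e_2)\neq f(e_2,e_1)$. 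In (2) and (3), however, both values are shown to be $0$, so they agree there.

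For (1), assume $\dt\ne0$ and put $x=\al$ in \eqref{eq:defdbd1}. Since $[\al,y]=0$ and $[\al,f(y,z)]=0$, this gives $\dt[f(\al,z),y]=0$ for all $y$. Hence $f(\al,z)\in Z(\L)$. Symmetrically, put $y=\al$ in \eqref{eq:defdbd2}. The right side is $f(x,[\al,z])=0$ and the term $\dt[\al,f(x,z)]$ vanishes, so $\dt[f(x,\al),z]=0$ for all $z$. Hence $f(x,\al)\in Z(\L)$.

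For (2), first take $\dt\ne0$. Put $z=\al$ in \eqref{eq:defdbd1}: by (1) both $f(y,\al)$ and $f(x,\al)$ are central, so the left side vanishes and $f([x,y],\al)=0$. Likewise put $x=\al$ in \eqref{eq:defdbd2}: by (1) both $f(\al,y)$ and $f(\al,z)$ are central, which gives $f(\al,[y,z])=0$. By bilinearity, $f(\cdot,\al)$ and $f(\al,\cdot)$ vanish on all of $[\L,\L]$. Now take $\dt=0$. The identities read $f([x,y],z)=0$ and $f(x,[y,z])=0$ for all $x,y,z$, so $f$ vanishes whenever either argument lies in $[\L,\L]$, and in particular at $(\al,y)$ and $(y,\al)$ with $y\in[\L,\L]$. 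Finally, (3) is immediate from (2), since $[\L,\L]=\L$ when $\L$ is perfect.

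All steps are routine substitutions. The only point needing care is the order of the argument in (2): centrality from (1) must be established first, because it is what kills the left-hand sides. The case $\dt=0$ has to be handled separately, since (1) is not available there, but in that case the vanishing is trivial.
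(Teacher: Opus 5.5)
Your proof is correct and matches the paper's argument. For (1) the paper applies the $\dt$-derivation $f(x,\cdot)$ to $[\al,y]$, which is exactly your substitution into \eqref{eq:defdbd2}, and for (2) it expands $f([z,z'],\al)$ via \eqref{eq:defdbd1}, where the factor $\dt$ on both terms covers $\dt=0$ implicitly, just as your separate case does. Your reading of (1) as ``both values are central'' is also all the paper's proof establishes, and your abelian example rightly shows the literal equality cannot hold in general.
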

\begin{proof}
 (1) For any $x\in\L$, $\phi_x=f(x,\cdot)$ is a $\delta$-derivation of $\L$.
 Then for any $y\in\L$, we have
 \[0=\phi_x([\al,y])=\delta[\phi_x(\al),y]+\delta[\al,\phi_x(y)]=\delta[\phi_x(\al),y],\]
 which gives $f(x,\al)=\phi_x(\al)\in Z(\L)$ if $\dt\ne0$.
 Similarly, one gets $f(\al,x)\in Z(\L)$.

 (2) Since $f$ is bilinear, we may assume $y=[z,z']$ for some $z,z'\in\L.$
 Then we have
 \[f(y,\al)=f([z,z'],\al)=\delta[z,f(z',\al)]+\dt[f(z,\al),z']=0.\]
 Similarly, one gets $f(\al,y)=0$. (3) follows from (2).
\end{proof}

\begin{proposition}\label{prop:central}
 Any central $\dt$-biderivation of a perfect Lie algebra is a zero map.
\end{proposition}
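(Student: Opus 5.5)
The plan is to use the defining identity \eqref{eq:defdbd1} directly. Since $f$ takes values in $Z(\L)$, every bracket on its left-hand side vanishes, and this forces $f$ to vanish on commutators in the first argument. Perfectness then finishes the argument.

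Let $f$ be a central $\dt$-biderivation of a perfect Lie algebra $\L$, and take $x,y,z\in\L$. Because $f(y,z)$ and $f(x,z)$ lie in $Z(\L)$, we have $[x,f(y,z)]=0$ and $[f(x,z),y]=0$. So \eqref{eq:defdbd1} reduces to
\[f([x,y],z)=\dt[x,f(y,z)]+\dt[f(x,z),y]=0 .\]
By bilinearity, $f(w,z)=0$ for every $w\in[\L,\L]$ and every $z\in\L$. Since $\L$ is perfect, $[\L,\L]=\L$, and therefore $f=0$.

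The same conclusion also follows from \eqref{eq:defdbd2} in the second argument. Either identity is enough on its own.

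No real obstacle is expected. The only points to note are that the argument works for every $\dt$, including $\dt=0$, and that it uses nothing beyond bilinearity and perfectness.
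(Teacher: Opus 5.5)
Your proof is correct and is essentially the paper's argument. The paper applies \eqref{eq:defdbd2} to show that $f$ vanishes on $\L\times[\L,\L]$, while you apply the mirror identity \eqref{eq:defdbd1} to get vanishing on $[\L,\L]\times\L$; either suffices once perfectness is used, and your explicit use of bilinearity (elements of $[\L,\L]$ are sums of brackets) is slightly more careful than the paper's ``write $y=[z,z']$''.
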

\begin{proof}
 Let $\L$ be a perfect Lie algebra and $f$ a central $\dt$-biderivation of $\L$.
 For any $x,y\in\L$, write $y=[z,z']$ for some $z,z'\in\L.$
 By \eqref{eq:defdbd2}, we have
 $f(x,y)=f(x,[z,z'])=\dt[z,f(x,z')]+\dt[f(x,z'),z]=0$.
 So $f=0$.
\end{proof}

\begin{proposition}\label{prop:gradation}
 Suppose $G$ is an abelian group and $\L$ is a finitely generated $G$-graded Lie algebra
 such that $\L=\bigoplus_{g\in G}\L_g$.
 Then $\dbd\L$ is also $G$-graded,
 \[\dbd\L=\bigoplus_{g\in G}\dbd\L_g,\]
 where $\dbd\L_g=\{\phi\in\dbd\L\mid \phi(x,y)\in\L_{g+a+b}\text{ for any } a,b\in G, x\in\L_a,y\in\L_b\}$.
 We call elements in $\dbd\L_g$ are of degree $g$.
\end{proposition}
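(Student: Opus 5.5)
Given $\phi\in\dbd\L$ and $g\in G$, I will define the homogeneous component $\phi_g$ on homogeneous arguments by
\[\phi_g(x,y)=\pi_{g+a+b}\bigl(\phi(x,y)\bigr)\quad\text{for } x\in\L_a,\ y\in\L_b,\]
where $\pi_h:\L\to\L_h$ is the projection, and then extend $\phi_g$ bilinearly. The proof then has three steps. First, each $\phi_g$ is again a $\dt$-biderivation. Second, $\phi=\sum_g\phi_g$ with only finitely many nonzero $\phi_g$. Third, the sum $\sum_g\dbd\L_g$ is direct.

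\textbf{Step 1.} It suffices to check \eqref{eq:defdbd1} and \eqref{eq:defdbd2} on homogeneous $x\in\L_a$, $y\in\L_b$, $z\in\L_c$. Apply $\pi_{g+a+b+c}$ to \eqref{eq:defdbd1} for $\phi$. The bracket is graded, so $[\L_a,\L_h]\subseteq\L_{a+h}$, and therefore
\[\pi_{g+a+b+c}[x,\phi(y,z)]=[x,\pi_{g+b+c}\phi(y,z)]=[x,\phi_g(y,z)].\]
The other bracket term behaves the same way. On the right side, $[x,y]\in\L_{a+b}$, so the projection gives $\phi_g([x,y],z)$. Identity \eqref{eq:defdbd2} is handled identically. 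Hence $\phi_g\in\dbd\L_g$ for every $g$, and $\phi_g$ has degree $g$ by construction.

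\textbf{Step 2 (finiteness, the main point).} Let $x_1,\dots,x_k$ be homogeneous generators of $\L$; we may take them homogeneous by splitting each generator into its components. Only finitely many values $\phi(x_i,x_j)$ occur, each with finitely many nonzero components, so there is a finite set $S\subseteq G$ such that $\phi_g(x_i,x_j)=0$ for all $i,j$ whenever $g\notin S$.

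Now fix $g\notin S$. We know $\phi_g$ is a $\dt$-biderivation vanishing on all pairs of generators. Fix a generator $x_i$; then $\phi_g(x_i,\cdot)$ is a $\dt$-derivation. By \eqref{eq:defdbd2}, the set of $y$ with $\phi_g(x_i,y)=0$ is a subspace closed under brackets with elements of that set. It is a subalgebra containing all generators, hence equal to $\L$.

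So $\phi_g(x_i,\cdot)=0$ for every $i$. Apply the same argument to $\phi_g(\cdot,y)$ for each fixed $y$, using \eqref{eq:defdbd1}: its kernel contains all $x_i$ and is closed under brackets. Thus $\phi_g=0$.

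Hence only finitely many $\phi_g$ are nonzero. On homogeneous $x,y$ we have $\phi(x,y)=\sum_g\pi_{g+a+b}\phi(x,y)=\sum_g\phi_g(x,y)$, so $\phi=\sum_g\phi_g$ by bilinearity.

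\textbf{Step 3 (directness).} Suppose $\sum_g\psi_g=0$ with $\psi_g\in\dbd\L_g$, finitely many nonzero. Evaluate at homogeneous $x\in\L_a$, $y\in\L_b$. The values $\psi_g(x,y)$ lie in the distinct spaces $\L_{g+a+b}$, so each vanishes. Hence every $\psi_g=0$.

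The main obstacle is Step 2. It relies on the observation that a $\dt$-derivation vanishing on a generating set vanishes identically; this holds for any $\dt$, including $\dt=0$. Finite generation is needed exactly there.
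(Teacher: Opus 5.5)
Your proof is correct and complete. It follows the standard argument the paper defers to (the paper omits the details and cites Lemma 2.1 of \cite{LC}): project onto homogeneous components, check that each component is again a $\dt$-biderivation, and use finitely many homogeneous generators to show that only finitely many components are nonzero. Your two-step kernel argument makes explicit the uniform finiteness that the paper leaves implicit: it first fixes a generator in the first slot and uses \eqref{eq:defdbd2}, then fixes an arbitrary element in the second slot and uses \eqref{eq:defdbd1}.
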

\begin{proof}
 The proof is similar to that of Lemma 2.1 in \cite{LC}, and we omit the details.
\end{proof}

\begin{proposition}\label{prop:induction}
Let $\phi$ be a $\dt$-biderivation of a Lie algebra $\L$ with center $Z(\L)$.
Define
\[\ov\phi(\ov x, \ov y) = \ov{\phi(x, y)}=\phi(x, y) + Z(\L),\]
for all $x, y\in\L$, where $\ov x = x+Z(\L) \in \L/Z(\L)$.
Then $\ov\phi$ is a $\dt$-biderivation of $\L/Z(\L)$.
\end{proposition}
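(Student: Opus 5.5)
The plan has three parts: check that $\ov\phi$ is well defined, check that it is bilinear, and verify the two identities \eqref{eq:defdbd1} and \eqref{eq:defdbd2} on the quotient. Only well-definedness needs an argument. Once the formula makes sense, bilinearity and the identities descend directly from those of $\phi$.

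For well-definedness, take $x,x',y,y'\in\L$ with $x-x'=\al\in Z(\L)$ and $y-y'=\beta\in Z(\L)$. Bilinearity gives
\[\phi(x,y)-\phi(x',y')=\phi(\al,y)+\phi(x',\beta).\]
We must show both terms lie in $Z(\L)$.

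If $\dt\ne0$, this is Proposition \ref{prop:onecenter}(1), which gives $\phi(\al,y),\phi(x',\beta)\in Z(\L)$. If $\dt=0$, the identities \eqref{eq:defdbd1} and \eqref{eq:defdbd2} reduce to $\phi([x,y],z)=0=\phi(x,[y,z])$. We would still need $\phi(\al,y)\in Z(\L)$ for central $\al$, and this is the main obstacle. Proposition \ref{prop:onecenter}(1) genuinely uses $\dt\ne0$, and for $\dt=0$ the values $\phi(\al,y)$ seem unconstrained. For example, on an abelian $\L$ we have $Z(\L)=\L$, so the claim is trivial there; but in general I expect no argument exists.

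I would try two ways to handle this case. The first is to read the statement with the standing hypothesis $\dt\ne0$; I suspect this is implicit, since $\dt=0$ makes $\phi$ merely vanish on derived subalgebras. The second, if $\dt=0$ must be covered, is to look for a counterexample showing the claim fails. A natural candidate is $\L=\W\oplus\C c$ with $\phi(c,c)=L_0$ and $\phi$ zero otherwise. This $\phi$ vanishes whenever either argument lies in $[\L,\L]=\W$, so it is a $0$-biderivation. Since $c$ and $0$ represent the same class modulo $Z(\L)=\C c$, well-definedness would require $L_0\in Z(\L)$, which fails. So the case $\dt=0$ must be excluded, or handled by a convention. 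I would state the proposition for $\dt\ne0$, where the argument above is complete.

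It remains to verify the identities for $\dt\ne0$. For \eqref{eq:defdbd1} we compute
\[\dt[\ov x,\ov\phi(\ov y,\ov z)]+\dt[\ov\phi(\ov x,\ov z),\ov y]=\ov{\dt[x,\phi(y,z)]+\dt[\phi(x,z),y]}=\ov{\phi([x,y],z)}=\ov\phi([\ov x,\ov y],\ov z).\]
This uses that $x\mapsto\ov x$ is a Lie algebra homomorphism, so brackets of cosets are cosets of brackets. The identity \eqref{eq:defdbd2} is verified in exactly the same way.
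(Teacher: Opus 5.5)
Your proof is correct for $\dt\ne0$ and follows the paper's one-line proof. Well-definedness comes from Proposition \ref{prop:onecenter}(1), and bilinearity together with \eqref{eq:defdbd1} and \eqref{eq:defdbd2} then descend because $x\mapsto\ov x$ is a Lie homomorphism.

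Your caveat about $\dt=0$ is also right, and it is a defect of the statement, not of your proof. Proposition \ref{prop:onecenter}(1) needs $\dt\ne0$, and your $0$-biderivation of $\W\oplus\C c$ with $\phi(c,c)=L_0$ is a valid counterexample.

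The statement survives under $\dt\ne0$, or under $Z(\L)\subseteq[\L,\L]$, since a $0$-biderivation vanishes whenever either argument lies in $[\L,\L]$. The latter hypothesis covers every use in the paper: $\V$, $\twab$ for $(a,b)\ne(0,1)$ (which is perfect), and $\widetilde W(0,1)$, whose center lies in its derived subalgebra.
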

\begin{proof}
 The verification is easy by using Proposition \ref{prop:onecenter},
 and we omit the details.
\end{proof}

\section{$\delta$-biderivations of the Witt and Virasoro algebras}

In this section we determine all $\delta$-biderivations of the Witt algebra $\W$
and the Virasoro algebra $\V$ in two subsections.

\subsection{$\delta$-biderivations of the Witt algebra}

Clearly the bilinear map $\pi:\W\times\W\longrightarrow\W$ defined by
\[\pi(L_i,L_j)=[L_i,L_j]\quad\text{ for all } i,j\in\Z,\]
is a (1-)biderivation, and for any $n\in\Z$ the bilinear map $\theta_n:\W\times\W\longrightarrow\W$ defined by
\[\theta_n(L_i,L_j)=L_{n+i+j}\quad\text{ for all } i,j\in\Z,\]
is a $\frac12$-biderivation.

\begin{theorem}\label{thm:witt}
 The space of all $\delta$-biderivations of the Witt algebra $\W$ is
 \[\dbd\W=\begin{cases}
     \C\,\pi & \text{ if } \dt=1;\\
     \spanc{\theta_n\mid n\in\Z}& \text{ if } \dt=\frac12;\\
     0 & \text{ otherwise}.\end{cases}\]
\end{theorem}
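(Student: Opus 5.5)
The plan is to use the fact, noted right after the definition of $\dt$-biderivation, that for each fixed $x\in\W$ the maps $f(x,\cdot)$ and $f(\cdot,x)$ are $\dt$-derivations of $\W$. Theorem \ref{thm:dtderW} then settles the shape of $f$ one variable at a time, and the two defining identities \eqref{eq:defdbd1}, \eqref{eq:defdbd2} pin down the coefficients.

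\textbf{Case $\dt\ne\frac12,1$.} By Theorem \ref{thm:dtderW}(3), $f(x,\cdot)=0$ for every $x$, so $f=0$.

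\textbf{Case $\dt=1$.} By Theorem \ref{thm:dtderW}(1), for each $i$ there is a finitely supported family $a_{i,j}\in\C$ with $f(L_i,\cdot)=\sum_j a_{i,j}\ad L_j$. Evaluating,
\[f(L_i,L_k)=\sum_j a_{i,j}(j-k)L_{j+k}.\]
Applying the same reasoning to $f(\cdot,L_k)$ gives $f(L_i,L_k)=\sum_j b_{k,j}(i-j)L_{i+j}$.

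I would compare the coefficients of $L_{j+k}$ on both sides, as $i,k$ range over $\Z$. Setting $j+k=i+j'$, this reads
\[a_{i,j}(j-k)=b_{k,j+k-i}(2i-j-k).\]
Fixing $i$ and letting $k$ vary, the left side is linear in $k$ while $a_{i,j}$ does not depend on $k$. I expect this to force $a_{i,j}=0$ unless $j=i$, so that $f(L_i,L_k)=\lambda_i[L_i,L_k]$, and symmetrically $f(L_i,L_k)=\mu_k[L_i,L_k]$. Hence $\lambda_i=\mu_k$ whenever $i\ne k$, so all the constants coincide and $f=\lambda\pi$.

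This coefficient comparison is the main obstacle. The cleanest route is the standard trick for skew-symmetric biderivations: since $f(x,\cdot)$ is a derivation, $f(x,y)=[\phi(x),y]$, and likewise $f(x,y)=[x,\psi(y)]$. Then $[\phi(L_i),L_k]=[L_i,\psi(L_k)]$. Taking $k=0$ gives $\phi(L_i)$ in terms of $\psi(L_0)$ and $L_i$, modulo the kernel of $\ad L_0$, which is $\C L_0$. Feeding this back with $k=1,-1$ should give $\phi(L_i)=\lambda L_i$, since the centralizer of $\W$ is trivial. I would do the bookkeeping with degree considerations, using Proposition \ref{prop:gradation} to assume $f$ is homogeneous of degree $g$. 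That reduces everything to a single scalar function $c(i,k)$ with $f(L_i,L_k)=c(i,k)L_{i+k+g}$.

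\textbf{Case $\dt=\frac12$.} By Proposition \ref{prop:gradation}, assume $f$ is homogeneous of degree $n$. Theorem \ref{thm:dtderW}(2) then gives $f(L_i,\cdot)=\alpha_i\vf_{i+n}$, so $f(L_i,L_k)=\alpha_iL_{i+k+n}$. Symmetrically, $f(L_i,L_k)=\beta_kL_{i+k+n}$. Hence $\alpha_i=\beta_k$ for all $i,k$, so both are a common constant and $f$ is a multiple of $\theta_n$.

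Finally, I would check directly that $\pi$ is a $1$-biderivation (this is the Jacobi identity) and that $\theta_n$ is a $\frac12$-biderivation. For the latter, $\frac12\bigl((j-i-k-n)+(i+k+n-j)\bigr)$ must be compared with $i-j$, so I would verify the defining identity explicitly rather than rely on this heuristic.

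The homogeneous reduction is legitimate because $\W$ is finitely generated, by $L_{\pm1},L_{\pm2}$. A general $f$ is then a sum of homogeneous components, possibly infinitely many but pointwise finite. Each component is a multiple of $\theta_n$, or of $\pi$ when its degree is $0$, and vanishes otherwise. For $\dt=1$ only the degree-$0$ component survives, giving $\C\pi$. For $\dt=\frac12$, each $f(L_i,L_k)$ must be a finite sum, which forces the family of nonzero components to be finite. This yields the span of the $\theta_n$.
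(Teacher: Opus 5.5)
Your handling of the cases $\dt\ne\frac12,1$ and $\dt=\frac12$ matches the paper's argument. That includes your closing remark that pointwise finiteness leaves only finitely many nonzero $\theta_n$-components.

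The gap is in the case $\dt=1$. The decisive claim, that $a_{i,j}=0$ for $j\ne i$, is only stated as an expectation. The reason you give for it does not work. Linearity of $a_{i,j}(j-k)$ in $k$ tells you nothing, because the other side involves $b_{k,j+k-i}$, which may depend on $k$ arbitrarily. Your two fallback routes are also left unfinished (``should give'', ``I would do the bookkeeping''). So as written, the case $\dt=1$ is not proved.

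The missing observation is a single coefficient comparison. In $f(L_i,L_k)=\sum_{j'}b_{k,j'}(i-j')L_{i+j'}$ the coefficient of $L_{2i}$ is $0$, whereas in $\sum_j a_{i,j}(j-k)L_{j+k}$ it is $a_{i,2i-k}(2i-2k)$. Hence $a_{i,j}=0$ for $j\ne i$. Comparing coefficients of $L_{2k}$ in the same way gives $b_{k,j}=0$ for $j\ne k$. After that, your argument that $\lambda_i=\mu_k$ for $i\ne k$ finishes the case, and this route does not even need the grading.

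The paper does the same thing in homogeneous form. For $f$ of degree $n$, it writes $f(L_i,\cdot)=\lambda_i\ad L_{n+i}$ and $f(\cdot,L_j)=\mu_j\ad L_{n+j}$, so $\lambda_i(n+i-j)=\mu_j(n-i+j)$. Taking $j=n+i$ gives $2n\mu_{n+i}=0$, so $n\ne0$ forces $f=0$. For $n=0$ one gets $(\lambda_i+\mu_j)(i-j)=0$, hence $f=\lambda\pi$.

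Finally, your doubt about $\theta_n$ comes from a sign slip. In fact $[L_i,L_{n+j+k}]=(i-j-k-n)L_{n+i+j+k}$, so the two terms sum to $\frac12(2i-2j)=i-j$ and the identity holds.
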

\begin{proof}
 Let $f$ be a $\dt$-biderivation of $\W$.
 Then for any $i\in\Z$, the linear map $f(L_i,\cdot)$ and $f(\cdot,L_i)$ are $\dt$-derivations of $\W$.
 Moreover, we may assume $f\in\dbd\W_{n}=\{g\in\dbd\W\mid g(L_i,L_j)\in \C L_{n+i+j}\text{ for any }i,j\in\Z\}$ for some $n\in\Z$ by Proposition \ref{prop:gradation}.

 If $\dt\ne\frac12,1$, then by Theorem \ref{thm:dtderW}(3) we have $f(L_i,\cdot)=0$,
 i.e., $f(L_i,L_j)=0$ for any $i,j\in\Z$. Hence $f=0$.

 If $\dt=1$, then for any $i\in\Z$ the linear map $f(L_i,\cdot)$ is a derivation of degree $n+i$, hence $f(L_i,\cdot)=\lmd\ad L_{n+i}$ for some $\lmd\in\C$ by Theorem \ref{thm:dtderW}(1).
 So we have $f(L_i,L_j)=\lmd [L_{n+i},L_j]=\lmd(n+i-j)L_{n+i+j}$.
 Similarly, since $f(\cdot, L_j)$ is a derivation of degree $n+j$, we have
 $f(L_i,L_j)=\mu(n-i+j)L_{n+i+j}$ for some $\mu\in\C$. So
 \[\lmd(n+i-j)=\mu(n-i+j) \quad\text{ for any }i,j\in\Z.\]
 Let $j=n+i$ and we get $n\mu=0$.
 If $n\ne0$, then $\mu=0$, and further we have $\lmd=0$. So $f=0$.
 If $n=0$, then it is clear that $\mu=-\lmd$ and hence $f=\lmd\pi$.
 This proves that $\mathcal{BD}^{[1]}(\W)=\C\,\pi.$

 If $\dt=\frac12$, then for any $i,j\in\Z$ we have by Theorem \ref{thm:dtderW}(2) that
 \[f(L_i,\cdot)=a_i\vf_{n+i},\quad f(\cdot, L_j)=b_j\vf_{n+j},\]
 where $a_i,b_j\in\C$.
 Hence $a_iL_{n+i+j}=f(L_i, L_j)=b_jL_{n+i+j}$,
 which further implies that $a_i=b_j$ for all $i,j\in\Z$.
 So $f$ is a multiple of $\theta_n$.
 This proves the case $\dt=\frac12$.
\end{proof}

\subsection{$\delta$-biderivations of the Virasoro algebra}

In this subsection we determine all $\delta$-biderivations of the Virasoro algebra $\V$.
It is easy to check that the bilinear map $\widetilde\pi:\V\times\V\longrightarrow\V$ defined by
\[\widetilde\pi(L_i,L_j)=[L_i,L_j]=(i-j)L_{i+j}+\frac1{12}(i^3-i)C_0\dt_{i+j,0}\quad\text{ for all } i,j\in\Z,\]
is a (1-)biderivation of degree $0$.

\begin{theorem}\label{thm:vir}
 The space of all $\delta$-biderivations of the Virasoro algebra $\V$ is
 \[\dbd\V=\begin{cases}
     \C\,\widetilde\pi & \text{ if } \dt=1;\\
     0 & \text{ otherwise}.\end{cases}\]
\end{theorem}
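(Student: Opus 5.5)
Let $f$ be a $\dt$-biderivation of $\V$. The plan is to push $f$ down to $\W=\V/\mathbb C C_0$, read off the quotient from Theorem \ref{thm:witt}, and then pin down the central part of $f$. Since $\V$ is perfect, Proposition \ref{prop:onecenter}(3) gives $f(C_0,\cdot)=f(\cdot,C_0)=0$. Hence $f$ is determined by the values $f(L_i,L_j)$. By Proposition \ref{prop:gradation} (with $\deg C_0=0$), we may assume $f$ is homogeneous of degree $n$. Then
\[f(L_i,L_j)=\lambda_{i,j}L_{n+i+j}+\mu_{i,j}\dt_{n+i+j,0}C_0 .\]
By Proposition \ref{prop:induction}, $\ov f$ is a $\dt$-biderivation of $\W$. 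So Theorem \ref{thm:witt} gives three possibilities: $\ov f=0$ if $\dt\ne\frac12,1$; $\ov f=\lambda\pi$ with $n=0$ if $\dt=1$; and $\ov f=\lambda\theta_n$ if $\dt=\frac12$.

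In each case, $f-g$ is a central $\dt$-biderivation, where $g$ is a suitable lift of $\ov f$. Proposition \ref{prop:central} then forces $f=g$.
\begin{itemize}
\item If $\dt\ne\frac12,1$, take $g=0$; then $f$ itself is central, hence $f=0$.
\item If $\dt=1$, take $g=\lambda\widetilde\pi$, which is a biderivation lifting $\lambda\pi$. Then $f-\lambda\widetilde\pi$ is central, so $f=\lambda\widetilde\pi$.
\end{itemize}

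The main obstacle is $\dt=\frac12$, where we must show that no lift of $\theta_n$ exists unless $\lambda=0$. I would argue directly with \eqref{eq:defdbd2}, setting $x=L_i$, $y=L_j$, $z=L_k$. The $L$-component gives nothing new. The $C_0$-component can only appear when $i+j+k+n=0$. Writing $m=n+i+j$, that component reads
\[\tfrac12\lambda\,\tfrac{m^3-m}{12}+\tfrac12\lambda\,\tfrac{j^3-j}{12}\cdot(-1)\cdots\]
More precisely, it is
\[\tfrac12\Big(\lambda\tfrac{(n+i+j)^3-(n+i+j)}{12}+\lambda\tfrac{j^3-j}{12}\Big)=(j-k)\mu_{i,j+k}+\lambda\,\tfrac{j^3-j}{12}\,\dt_{j+k,0}\cdot(\text{adjust}),\]
together with the contributions of the unknown $\mu$'s.

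The cleanest route is to first use that $f(L_i,\cdot)$ is a $\frac12$-derivation of $\V$. I would either quote the known result that $\mathfrak{Der}_{\frac12}(\V)$ consists of scalar maps only, or reprove it. Reproving it goes as follows. A degree-$d$ $\frac12$-derivation $\phi$ with $\phi(L_j)=cL_{j+d}+\ldots$ satisfies
\[\phi([L_j,L_k])=\tfrac12[\phi L_j,L_k]+\tfrac12[L_j,\phi L_k].\]
Compare $C_0$-coefficients at $j+k=-d$, choosing $j,k$ with $j+k\ne0$ when $d\ne0$. The left side's central part is then $(j-k)\mu_{j+k}$, while the right side contributes $\tfrac{c}{24}\big((j+d)^3-(j+d)-(k+d)^3+(k+d)\big)$. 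This is cubic in $j$ along the line $j+k=-d$, while the left side is linear in $\mu$ at a single fixed index. Varying $j$ forces $c=0$.

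Applying this with $\phi=f(L_i,\cdot)$ gives $\lambda=0$ whenever $n+i\ne0$, and any single such $i$ already kills $\lambda$. Hence $f$ is central, and $f=0$ by Proposition \ref{prop:central}. This proves $\dbd\V=0$ for $\dt=\frac12$ and completes the case analysis.
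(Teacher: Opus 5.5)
Your proposal is correct and is essentially the paper's proof: reduce modulo $C_0$ to Theorem~\ref{thm:witt}, kill the central remainder with Proposition~\ref{prop:central}, and for $\delta=\frac12$ compare $C_0$-coefficients, where the cubic cocycle cannot be matched by a single fixed central unknown. The paper does this with the one substitution $x=L_i$, $y=L_{-i}$, $z=L_{-n}$ in \eqref{eq:defdbd1}, which gives $a_n(i^2-1)=24C_{0,-n}$; your $\frac12$-derivation computation along $j+k=-d$ gives $(2j+d)\bigl(\mu_{-d}-\frac{\lambda}{24}(j^2+jd+d^2-1)\bigr)=0$, which is equally conclusive.

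You should delete the abandoned ``direct'' display containing ``(adjust)''. It is incorrect as written: since $f(L_i,C_0)=0$, there is no $\delta_{j+k,0}$ term. It is also superseded by your second argument.
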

\begin{proof}
 Let $f$ be a $\dt$-biderivation of $\V$ and $\ov f$ the $\dt$-biderivation of the quotient algebra $\V/\C C_0$ defined by Proposition \ref{prop:induction}.
 Since $\V$ is perfect, we have $f(C_0,\V)=f(\V,C_0)=0$ by Proposition \ref{prop:onecenter}(3).
 Note that $\V/\C C_0\cong\W$.

 If $\dt\ne\frac12, 1$, then $\ov f=0$ by Theorem \ref{thm:witt},
 i.e., $f$ is a central $\dt$-biderivation of $\V$.
 So $f=0$ by Proposition \ref{prop:central}.

 If $\dt=1$, then $\ov f=\lmd\,\pi$ for some $\lmd\in\C$ by Theorem \ref{thm:witt}.
 So $f_1=f-\lmd \widetilde\pi$ is a central biderivation of $\V$.
 Then $f_1=0$ by Proposition \ref{prop:central}, and hence $f=\lmd\,\widetilde\pi$.

 For the case $\dt=\frac12$, we may further assume that $f$ is of degree $n\in\Z$ by Proposition \ref{prop:gradation}.
 Hence $\ov f=a_n\theta_n$ for some $a_n\in\C$ by Theorem \ref{thm:witt}.
 So for $i,j\in\Z$ we may write
 \[f(L_i,L_j)=a_nL_{n+i+j}+C_{i,j}C_0\quad\text{for some }C_{i,j}\in\C.\]
 For any $i\in\Z\setminus\{0\}$, apply $x=L_i, y=L_{-i}, z=L_{-n}$ to \eqref{eq:defdbd1} and we get
 \[a_n(i^2-1)-24C_{0,-n}=0,\]
 which implies $a_n=0$.
 So $f$ is a central $\frac12$-biderivation of $\V$.
 Hence $f=0$ by Proposition \ref{prop:central}.
\end{proof}

\begin{remark}
 (1) It is well known that a nontrivial derivation of a perfect Lie algebra $\L$ may be uniquely extended to a nontrivial derivation of the universal central extension of $\L$
 (see \cite{BM}).
 Such a extension may not exist for $\frac12$-biderivation.
 From Theorem \ref{thm:witt} and Theorem \ref{thm:vir} we see that the $\frac12$-biderivation $\theta_n$ of $\W$ has no nontrivial extension to the Virsaoro algebra.\\
 (2) One could also prove Theorem \ref{thm:vir} in the same way as in the proof of Theorem \ref{thm:witt} using $\dt$-derivations of $\V$, which were given in \cite{KKS}.
\end{remark}

\section{$\delta$-biderivations of the $W$-algebras}

In this section we determine all $\delta$-biderivations of the $W$-algebras $\wab$ and $\twab$ for arbitrary $\dt,a,b\in\C$.
All $\dt$-derivations of $\wab$ were given in \cite{KKS}.
\begin{proposition}\label{prop:derwab}
 (1) The space of (1-)derivations of $\wab$ is
 \[\der\wab=\begin{cases}
     \inn\wab\oplus\C D_1\oplus \C D_2^0\oplus \C D_3 &\text{if } (a,b)=(0,0),\\
     \inn\wab\oplus\C D_1\oplus \C D_2^k              &\text{if } (a,b)=(0,k), k=1,2,\\
     \inn\wab\oplus\C D_1                             &\text{otherwise},
  \end{cases}\]
 where the derivations $D_1,D_2^k,k=0,1,2$, and $D_3$ are defined by
 \begin{align*}
  &D_1(L_i)=0,\quad D_1(I_i)=I_i;\\
  &D_2^k(L_i)=i^{k+1}I_i,\quad D_2^k(I_i)=0;\\
  &D_3(L_i)=I_i,\quad D_3(I_i)=0.
 \end{align*}
 (2) The space of $\frac12$-derivations of $\wab$ is
 \[\mathfrak{Der}_{\frac12}(\wab)=\begin{cases}
     \spanc{\vf_j,\psi_j\mid j\in\Z}   &\text{if } b=-1,\\
     \C\,\vf_0 &\text{if } b\ne-1,
  \end{cases}\]
 where the $\frac12$-derivations $\vf_j$ and $\psi_j$ are defined by
 \[\vf_j(L_i)=L_{i+j},\ \vf_j(I_i)=I_{i+j};\quad\quad
   \psi_j(L_i)=I_{i+j},\ \psi_j(I_i)=0.\]
 (3) For $\dt\ne\frac12,1$, the space of $\dt$-derivations of $\wab$ is 0.
\end{proposition}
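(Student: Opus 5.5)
The proposition is quoted from \cite{KKS}, but I would reprove it directly by a graded computation. The plan treats all $\dt$ at once, using Theorem \ref{thm:dtderW} for the Witt part and reserving case distinctions for the module part.

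\emph{Reduction to homogeneous maps.} Since $\wab$ is $\Z$-graded with $\wab_j=\C L_j\oplus\C I_j$ and is finitely generated, the same argument as in Proposition \ref{prop:gradation} shows that every $\dt$-derivation is a sum of homogeneous ones. So fix $f$ of degree $j$ and write
\[
f(L_i)=\al_iL_{i+j}+\beta_iI_{i+j},\qquad f(I_i)=\gamma_iL_{i+j}+\epsilon_iI_{i+j}.
\]

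\emph{The $\W$-component.} Project the identity $f([L_i,L_k])=\dt[f(L_i),L_k]+\dt[L_i,f(L_k)]$ onto the $L$-part. This shows that $L_i\mapsto\al_iL_{i+j}$ is a $\dt$-derivation of $\W$. By Theorem \ref{thm:dtderW} it is therefore:
\begin{itemize}[leftmargin=*]
\item $0$ if $\dt\ne\frac12,1$;
\item $\lambda\,\ad L_j$ if $\dt=1$;
\item $\lambda\vf_j$ if $\dt=\frac12$.
\end{itemize}
For $\dt=1$ I subtract $\lambda\,\ad L_j$, so that $\al=0$.

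\emph{Killing $\gamma$.} Take the $L$-component of the relation on $(I_i,I_k)$. This gives $\dt\bigl(\gamma_i[L_{i+j},I_k]+\gamma_k[I_i,L_{k+j}]\bigr)_L=0$, which carries no information, because $[L,I]\subset I$. Instead I use the pair $(L_m,I_k)$ and take $L$-components:
\[
-(a+k+bm)\gamma_{m+k}=\dt(m-k-j)\gamma_k .
\]
This is a functional equation for $\gamma$. I would show that it forces $\gamma=0$, except possibly in degenerate cases, by evaluating at $m=0$ and then comparing two choices of $m$. When $\dt\ne0$, this kills $\gamma$ for generic $(a,b)$. The special values must be checked by hand, and there I expect $\gamma=0$ anyway, since no such maps appear in the statement.

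\emph{The $I$-components.} These give two scalar recursions. From $(L_m,I_k)$:
\[
-(a+k+bm)\epsilon_{m+k}=\dt\bigl(-(a+k+bm+bj)\al_m-(a+k+j+bm)\epsilon_k\bigr).
\]
From $(L_m,L_k)$:
\[
(m-k)\beta_{m+k}=\dt\bigl(-(a+m+j+bk)\beta_m+(a+k+j+bm)\beta_k\bigr).
\]

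For $\dt=1$, the first recursion with $\al=0$ forces $\epsilon$ to be constant, which gives $D_1$, plus possibly $\ad I$-type contributions. The second is exactly the cocycle condition for $H^1(\W,M_{a,b})$. The standard computation, setting $m=0$ and then $k=\pm1,\pm2$, reduces $\beta$ to inner derivations $\ad I_{j}$. The only exceptions are $a=0$ with $b\in\{0,1,2\}$ in degree $j=0$, which produce $D_2^b$ and, for $b=0$, also $D_3$ (the case $\beta_i$ constant).

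For $\dt=\frac12$ with $\al=\lambda$, set $m=0$ in the $\epsilon$-recursion; this yields $\epsilon_k$ in terms of $\lambda$, and consistency forces $\epsilon_k=\lambda$. In the $\beta$-recursion, setting $m=0$ gives $\beta_k$ proportional to $\beta_0$ with coefficient $(k\cdots)/(\dots)$. Plugging back in, a constant solution $\beta_k=\mu$ exists exactly when $b=-1$ (giving $\psi_j$). Likewise, $\lambda\ne0$ with $j\ne0$ survives only when $b=-1$, because the $\epsilon$-equation becomes $\frac12(a+k+bm)\lambda$ versus $\frac12(a+k+bm+bj+\dots)$, and matching requires $bj=-j$. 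This yields $\vf_j$ for $b=-1$ and only $\vf_0$ otherwise.

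For $\dt\ne\frac12,1$, with $\al=0$, the recursions at $m=0$ give $(1-\dt)$-type factors times $\epsilon$ and $\beta$, forcing both to vanish.

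\emph{Main obstacle.} The hard step is the case analysis of the $\beta$-recursion over the exceptional $(a,b)$. This is essentially the classical computation of $H^1(\W,\text{intermediate series})$, and I would import that result rather than redo it. I would also be careful with small $k$ where the coefficients $a+k+bm$ vanish, since these are exactly where extra solutions such as $D_2^k$ and $D_3$ could hide.
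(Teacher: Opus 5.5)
The paper only quotes this proposition from the literature, so your outline has to stand on its own. It breaks at part (3), and the break cannot be repaired, because (3) is false as stated.

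\textbf{The $\beta$-step for $\dt\ne\frac12,1$.} Your $I$-component of the $(L_m,L_k)$ relation has the wrong sign. Since $[I_{m+j},L_k]=(a+m+j+bk)I_{m+k+j}$ and $[L_m,I_{k+j}]=-(a+k+j+bm)I_{m+k+j}$, it reads
\[(m-k)\beta_{m+k}=\dt\bigl((a+m+j+bk)\beta_m-(a+k+j+bm)\beta_k\bigr).\]
With your sign, constants would solve it at $\dt=\frac12$ only for $b=3$, which contradicts your own conclusion $b=-1$.

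Setting $m=0$ gives $\bigl(\dt(a+j)+(\dt-1)k\bigr)\beta_k=\dt(a+j+bk)\beta_0$. This only expresses $\beta_k$ through $\beta_0$; it does not force $\beta=0$ as you claim. For constant $\beta$ the right-hand side equals $\dt(1-b)(m-k)\beta$. Since $\psi_j$ kills every $I_k$ and $[I_i,I_k]=0$, all the other relations hold trivially. Hence $\psi_j$ is a $\dt$-derivation of $\wab$ whenever $\dt(1-b)=1$.

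This condition gives $b=-1$ for $\dt=\frac12$ and $b=0$ for $\dt=1$, but it also gives, for example, a nonzero $2$-derivation $\psi_0$ of $W(a,\frac12)$:
\[2\bigl([I_3,L_1]+[L_3,I_1]\bigr)=2\bigl((a+\tfrac72)-(a+\tfrac52)\bigr)I_4=2I_4=\psi_0([L_3,L_1]).\]
Similarly, $\psi_0$ is a nonzero $(-1)$-derivation of $W(a,2)$.

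\textbf{The case $\dt=0$.} Part (3) does not exclude $\dt=0$, and your argument, which already assumes $\dt\ne0$ for $\gamma$, cannot cover it. For $(a,b)=(0,1)$ we have $I_0\notin W(0,1)'$. So the map sending $I_0\mapsto I_0$ and all other basis vectors to $0$ is a nonzero $0$-derivation.

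\textbf{What your method actually yields.} Carried out correctly, it gives, for $\dt\notin\{0,\frac12,1\}$:
\[\derdt\wab=\spanc{\psi_j\mid j\in\Z}\ \text{ if }\dt(1-b)=1,\quad\text{and }0\text{ otherwise}.\]
In that exceptional case $\Psi_n(L_i,L_j)=I_{n+i+j}$ is a nonzero $\dt$-biderivation, so Theorem~\ref{thm:wab}(1) inherits the error.

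\textbf{A smaller, fixable gap.} Your treatment of $\gamma$ in the degenerate cases is circular: you argue that $\gamma=0$ because ``no such maps appear in the statement''. Take $\dt=1$, $(a,b)=(0,-1)$, $j=0$. Then every constant $\gamma_k=c$ satisfies $-(a+k+bm)\gamma_{m+k}=\dt(m-k-j)\gamma_k$. What kills it is the $I$-component of the $(I_i,I_k)$ relation, which you set aside:
\[\dt\bigl(\gamma_k(a+i+b(k+j))-\gamma_i(a+k+b(i+j))\bigr)=0.\]
Here it reduces to $2c(i-k)=0$, so $c=0$. With the sign corrected and this relation used, your outline for parts (1) and (2) is sound.
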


\subsection{$\delta$-biderivations of $\wab$}

In this subsection we determine the space $\dbd\wab$ of all $\delta$-biderivations of the $W$-algebra $\wab$.

\begin{theorem}\label{thm:wab}
 (1) If $\dt\ne\frac12,1$, then $\dbd\wab=0$.\\
 (2) For $\dt=1$, we have
 \[\mathfrak{BD}^{[1]}(\wab)=\begin{cases}
     \spanc{\pi,\psina 0\mid n\in\Z} & \text{ if } b=0;\\
     \spanc{\pi,\psina 1\mid n\in\Z} & \text{ if } b=1;\\
     \spanc{\pi,\Theta^{(0,-1)}}      & \text{ if } a=0,b=-1;\\
     \C\,\pi                         & \text{ otherwise},\end{cases}\]
 where $\pi(x,y)=[x,y], x,y\in\wab$, and the biderivations $\psina 0,\psina 1,\Theta^{(0,-1)}$ are defined by
 \begin{align*}
  &\psina 0(L_i,L_j)=I_{i+j+n},\quad \psina 0(L_i,I_j)=\psina 0(I_i,L_j)
                          =\psina 0(I_i,I_j)=0;\\
  &\psina 1(L_i,L_j)=(a+i+j+n)I_{i+j+n},\quad \psina 1(L_i,I_j)=\psina 1(I_i,L_j)
                          =\psina 1(I_i,I_j)=0;\\
  &\Theta^{(0,-1)}(L_i,L_j)=(i-j)I_{i+j},\quad
   \Theta^{(0,-1)}(L_i,I_j)=\Theta^{(0,-1)}(I_i,L_j)
    =\Theta^{(0,-1)}(I_i,I_j)=0.
 \end{align*}
 (3) For $\dt=\frac12$, we have
 \[\mathfrak{BD}^{[\frac12]}(\wab)=\begin{cases}
     \spanc{\Phi_n,\Psi_n\mid n\in\Z} & \text{ if } b=-1;\\
     0                         & \text{ otherwise},\end{cases}\]
 where $\Phi_n, \Psi_n$ are the bilinear maps defined by
 \begin{align*}
 &\Phi_n(L_i,L_j)=L_{n+i+j},\quad \Phi_n(L_i,I_j)=\Phi_n(I_i,L_j)=I_{n+i+j},\quad
                          \Phi_n(I_i,I_j)=0;\\
 &\Psi_n(L_i,L_j)=I_{n+i+j},\quad \Psi_n(L_i,I_j)=\Psi_n(I_i,L_j)
                          =\Psi_n(I_i,I_j)=0
 \end{align*}
\end{theorem}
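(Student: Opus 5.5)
We follow the proof of Theorem \ref{thm:witt}. Let $f\in\dbd\wab$. The $\Z$-grading of $\wab$ (with $L_i,I_i$ of degree $i$) is finitely generated; for instance, $L_{\pm1},L_{\pm2},I_0,I_{\pm1}$ generate, up to finitely many exceptional $(a,b)$ that need a slightly larger generating set. So by Proposition \ref{prop:gradation} we may assume $f$ has degree $n\in\Z$. Then for each $x$ of degree $k$, the maps $f(x,\cdot)$ and $f(\cdot,x)$ are $\dt$-derivations of degree $n+k$. Part (1) is immediate from Proposition \ref{prop:derwab}(3): every $f(x,\cdot)$ vanishes, so $f=0$.

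\textbf{The case $\dt=\frac12$.} If $b\ne-1$, Proposition \ref{prop:derwab}(2) gives only $\C\vf_0$, which has degree $0$. Hence $f(L_i,\cdot)=f(I_i,\cdot)=0$ unless $i=-n$, and likewise in the second slot. For $k\ne-n$ we get
\[
c\,L_k=f(L_{-n},\cdot)(L_k)=f(\cdot,L_k)(L_{-n})=0,
\]
so $c=0$, and similarly for $I_{-n}$. Thus $f=0$. If $b=-1$, write
\[
f(L_i,\cdot)=\al_i\vf_{n+i}+\be_i\psi_{n+i},\quad f(\cdot,L_j)=\al'_j\vf_{n+j}+\be'_j\psi_{n+j},
\]
and write the analogous expansions $\gm_i,\eta_i,\gm'_j,\eta'_j$ for $f(I_i,\cdot)$ and $f(\cdot,I_j)$. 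Evaluating $f(L_i,L_j)$, $f(L_i,I_j)$, $f(I_i,L_j)$ and $f(I_i,I_j)$ in two ways and comparing $L$- and $I$-components forces:
\begin{itemize}
\item $\al_i=\al'_j=:\al$ and $\be_i=\be'_j=:\be$ are constant;
\item $\gm_i=0$;
\item $\eta_i$ is forced by $f(I_i,L_j)=\al I_{n+i+j}$.
\end{itemize}
Hence $f=\al\Phi_n+\be\Psi_n$. It remains to check directly that $\Phi_n$ and $\Psi_n$ satisfy \eqref{eq:defdbd1}--\eqref{eq:defdbd2}, which is a routine computation.

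\textbf{The case $\dt=1$.} Here $f(L_i,\cdot)$ is a derivation of degree $n+i$. By Proposition \ref{prop:derwab}(1) it equals
\[
\lmd_i\,\ad L_{n+i}+\mu_i\,\ad I_{n+i}+\dt_{n+i,0}D^{(i)},
\]
where $D^{(i)}$ lies in the span of the outer derivations $D_1,D_2^k,D_3$, all of degree $0$. We write the analogous expressions for $f(\cdot,L_j)$, $f(I_i,\cdot)$ and $f(\cdot,I_j)$.
\begin{itemize}
\item The $L$-component of $f(L_i,L_j)$ reproduces the Witt argument: $\lmd_i$ is a constant $\lmd$, and $\lmd\ne0$ forces $n=0$. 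Subtracting $\lmd\pi$, we may assume $f$ takes values in $\spanc{I_k}$ on the pairs involving $L$'s.
\item Using $[I,I]=0$ and comparing $f(I_i,L_j)$ computed in both slots, we expect $f(I_i,\cdot)$ and $f(\cdot,I_j)$ to vanish. The $I$-components then leave only $f(L_i,L_j)=c_{ij}I_{n+i+j}$.
\item Comparing the $I$-component of $f(L_i,L_j)$ computed in the two slots gives
\[
\mu_i(a+n+i+bj)=\mu'_j(a+n+j+bi),
\]
up to correction terms at $i=-n$ or $j=-n$ from the outer derivations.
\end{itemize}

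Solving this functional equation is the main obstacle. Letting $i,j$ range freely, the solutions are:
\begin{itemize}
\item $b=0$: $\mu_i(a+n+i)$ is a constant, which gives $\psina 0$. The special index with $a+n+i=0$ (possible only if $a=0$) is covered by $D_3$ or $D_2^0$.
\item $b=1$: both sides are proportional to $(a+n+i+j)$, so $\mu_i$ is constant and we get $\psina 1$.
\item $(a,b)=(0,-1)$: the equation reads $\mu_i(n+i-j)=\mu'_j(n+j-i)$. As in the Witt computation this forces $n=0$ and yields $\Theta^{(0,-1)}$.
\item Otherwise: comparing three distinct values of $j$ forces $\mu=0$.
\end{itemize}
I would first handle generic $i,j$ (away from $-n$) to pin down the constants, and only then treat the finitely many boundary indices where $D_1,D_2^k,D_3$ could contribute. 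At those indices I would use \eqref{eq:defdbd1} with $x=L_{\pm1}$ to propagate values from the generic indices. Finally, we verify that $\psina 0$, $\psina 1$ and $\Theta^{(0,-1)}$ are biderivations.
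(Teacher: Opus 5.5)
Your proposal is correct. For parts (1) and (3) it is the paper's argument. Part (1) is read off from Proposition~\ref{prop:derwab}(3). For $\dt=\frac12$, $b=-1$ the paper likewise fixes a degree $n$, expands the one-slot maps in $\vf_{n+i},\psi_{n+i}$, and compares the two evaluations of each $f(x,y)$ to get $f=a_n\Phi_n+b_n\Psi_n$. For $b\ne-1$ the paper simply writes $f(x,y)=\lmd y=\mu x$, which is equivalent to your degree argument.

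The genuine difference is part (2). The paper does not prove it but quotes Theorem 3.2 of \cite{Tang2}. You rederive it from Proposition~\ref{prop:derwab}(1) by the same slot-comparison method used for $\W$, and this works:
\begin{enumerate}
\item Eliminating $\mu'_j$ from $\mu_i(a+n+i+bj)=\mu'_j(a+n+j+bi)$ for two indices $i\ne i'$ gives a polynomial identity in $j$.
\item Its coefficients of $j^2$, $j$ and $1$ give $b(\mu_i-\mu_{i'})=0$, then $(1-b^2)(i-i')\mu=0$, and for $b=-1$ also $2(a+n)(i-i')\mu=0$.
\item Together with separation of variables when $b=0$, this isolates exactly $\psina0$, $\psina1$ and $\Theta^{(0,-1)}$, the last only for $n=0$.
\item At the boundary index $-n$ things go as you predict: $D_3$ carries the constant when $(a,b)=(0,0)$, and the $D_2^k$- and $D_1$-coefficients are forced to vanish.
\end{enumerate}

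The citation buys brevity. Your route buys a self-contained proof in the same style as the rest of the paper, at the price of bookkeeping you only sketch. One point to fix when you write it out: the step you say you ``expect'' is the vanishing of $f(I_i,\cdot)$ and $f(\cdot,I_j)$ after subtracting $\lmd\pi$. It needs more than comparing $f(I_i,L_j)$ in both slots. The $D_1$-component of $f(I_{-n},\cdot)$ is invisible on the $L_j$, so it is killed only by also comparing $f(I_{-n},I_k)$, $k\ne-n$, in both slots.
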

\begin{proof}
 Let $f$ be a $\dt$-biderivation of $\wab$.
 Since for any $x\in\wab$, $f(x,\cdot)$ is a $\dt$-derivation of $\wab$,
 statement (1) follows from Proposition \ref{prop:derwab}(3).
 Statement (2) is Theorem 3.2 in \cite{Tang2}.

 (3) If $b\ne-1$, then by Proposition \ref{prop:derwab}(2), for any $x,y\in\wab$, we have
 $f(x,\cdot)=\lmd \vf_0$ and $f(\cdot,y)=\mu\vf_0$ for some $\lmd,\mu\in\C$.
 So $f(x,y)=\lmd y=\mu x$, which forces $\lmd=\mu=0$ and $f=0$.

 Now let $b=-1$ and we may assume that $f$ is of some fixed degree $n\in\Z$ by Proposition \ref{prop:gradation}.
 Then by Proposition \ref{prop:derwab}(2) we may write for $i\in\Z$ that
 \begin{align*}
   &f(L_i,\cdot)=a_{i,n}\vf_{n+i}+b_{i,n}\psi_{n+i},\quad
    f(\cdot, L_i)=a'_{i,n}\vf_{n+i}+b'_{i,n}\psi_{n+i},\\
   &f(I_i,\cdot)=c_{i,n}\vf_{n+i}+d_{i,n}\psi_{n+i},\quad
    f(\cdot,I_i)=c'_{i,n}\vf_{n+i}+d'_{i,n}\psi_{n+i},
 \end{align*}
 where $a_{i,n},b_{i,n},c_{i,n},d_{i,n},a'_{i,n},b'_{i,n},c'_{i,n},d'_{i,n}\in\C$.
 Then we have
 \[f(L_i,L_j)=a_{i,n}L_{n+i+j}+b_{i,n}I_{n+i+j}=a'_{j,n}L_{n+i+j}+b'_{j,n}I_{n+i+j},\]
 which implies that $a'_{j,n}=a_{i,n}, b'_{j,n}=b_{i,n}$ for any $i,j\in\Z$.
 Set $a_n=a_{i,n}, b_{n}=b_{i,n}$ and we have
 \begin{equation}\label{eq:fll}
  f(L_i,L_j)=a_{n}L_{n+i+j}+b_{n}I_{n+i+j}.
 \end{equation}
 Write $f(L_i,I_j)$ in two ways,
 \begin{equation}\label{eq:fli}
  f(L_i,I_j)=a_{n}I_{n+i+j}=c'_{j,n}L_{n+i+j}+d'_{j,n}I_{n+i+j},
 \end{equation}
 which gives $c'_{j,n}=0,d'_{j,n}=a_n$ for any $j\in\Z$.
 Similarly, write $f(I_i,L_j)$ in two ways, and we get $c_{j,n}=0,d_{j,n}=a_n$ for any $j\in\Z$. Hence
 \begin{equation}\label{eq:fil}
  f(I_i,L_j)=a_{n}I_{n+i+j}.
 \end{equation}
 Then $f(\cdot,I_i)=a_{n}\psi_{n+i}$ and $f(I_j,I_i)=0$.
 Together with \eqref{eq:fll}, \eqref{eq:fli} and \eqref{eq:fil}, we see that
 $f=a_n\Phi_n+b_n\Psi_n$.
 This complete the proof.
\end{proof}

\subsection{$\delta$-biderivations of $\twab$}

In this subsection we determine all $\delta$-biderivations of the algebra $\twab$.
The algebra $\widetilde W(0,1)$ has three independent central $\dt$-biderivations $\mathcal A,\mathcal B,\mathcal C$ for any $\dt\in\C$,
which are defined by
\[\mathcal A(I_0,I_0)=C_0,\quad \mathcal B(I_0,I_0)=C_1^1,\quad \mathcal C(I_0,I_0)=C_2^1,\]
and all other values being zero.

\begin{theorem}\label{thm:twab}
 (1) If $\dt\ne \frac12$ or 1, then
 $$\dbd\twab=\begin{cases}
          \spanc{\mathcal A,\mathcal B,\mathcal C}
            & \text{if }(a,b)=(0,1);\\
          \quad\quad  0 & \text{otherwise}.\end{cases}$$
 (2) If $\dt=1$, then
 $$\mathfrak{BD}^{[1]}\left(\twab\right)=\begin{cases}
      \spanc{\widetilde\pi,\widetilde\Theta^{(0,-1)}} & \text{if }a=0, b=-1;\\
      \spanc{\widetilde\pi,\mathcal A,\mathcal B,\mathcal C} &\text{if }a=0, b=1;\\
      \C\,\widetilde\pi & \text{otherwise},
   \end{cases}$$
 where $\widetilde\pi(x,y)=[x,y],x,y\in\twab$ and $\widetilde\Theta^{(0,-1)}$
 is the bilinear map on $\widetilde W(0,-1)$ with only nonvanishing values
 \[\widetilde\Theta^{(0,-1)}(L_i,L_j)=(i-j)I_{i+j}
         +\frac{i^3-i}{12}C_1^{-1}\dt_{i+j,0}\dt_{a,0}.\]
 (3) If $\dt=\frac12$, then
 \[\mathfrak{BD}^{[\frac12]}\left(\twab\right)=\begin{cases}
     \spanc{\widetilde\Psi_n\mid n\in\Z} & \text{ if } 0<a<1,b=-1;\\
     \spanc{\mathcal A,\mathcal B,\mathcal C} & \text{ if } a=0,b=1;\\
       \quad\quad\quad 0               & \text{ otherwise},\end{cases}\]
 where $\widetilde\Psi_n$ is the bilinear map on $\widetilde W(a,-1)$ with only nonvanishing values
 \[\widetilde\Psi_n(L_i,L_j)=I_{n+i+j}.\]
\end{theorem}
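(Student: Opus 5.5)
Let $f$ be a $\dt$-biderivation of $\twab$, and let $\mathfrak C$ denote the center, so that $\twab/\mathfrak C\cong\wab$. The plan is to reduce to Theorem \ref{thm:wab} through Proposition \ref{prop:induction}, and then to determine the possible central corrections. I will treat the perfect case $(a,b)\ne(0,1)$ first and the case $(a,b)=(0,1)$ separately.

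\emph{Perfect case.} By Proposition \ref{prop:onecenter}(3), $f$ vanishes whenever either argument lies in $\mathfrak C$. The induced map $\ov f$ is a $\dt$-biderivation of $\wab$.
\begin{itemize}
\item If $\dt\ne\frac12,1$, then $\ov f=0$, so $f$ is central and hence $f=0$ by Proposition \ref{prop:central}.
\item If $\dt=1$, then $\ov f$ lies in the span given in Theorem \ref{thm:wab}(2). For each basis element I must decide whether it lifts, i.e.\ whether there is a biderivation of $\twab$ inducing it. Once a lift exists, it is unique modulo central biderivations, and those vanish. The lift of $\pi$ is $\widetilde\pi$. For $a=0,b=-1$, the lift of $\Theta^{(0,-1)}$ is $\widetilde\Theta^{(0,-1)}$, which I check directly.
\item For $b=0$ and $b=1$ (with $a$ arbitrary, excluding $(0,1)$), I need to show that no nonzero combination of the $\psina 0$, respectively $\psina 1$, lifts. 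Working in a fixed degree $n$ (Proposition \ref{prop:gradation}), I write $f(L_i,L_j)=\lmd\,\psina{k}(L_i,L_j)+c_{i,j}$ with $c_{i,j}\in\mathfrak C$. Then I apply \eqref{eq:defdbd1} to triples such as $(L_i,I_j,L_k)$ and $(L_i,L_{-i},L_k)$, chosen so that the degree forces a central term $\dt_{m+n,0}$ to appear. The aim is to get a relation, polynomial in $i$, between $\lmd$ and finitely many $c$'s, in the style of the Virasoro argument. This should force $\lmd=0$; the cases $a\ne0$ need checking, since the cocycles there vanish and the lift may simply exist.
\item If $\dt=\frac12$, Theorem \ref{thm:wab}(3) gives $\ov f=\alpha\Phi_n+\beta\Psi_n$ when $b=-1$, and $\ov f=0$ otherwise. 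Exactly as in Theorem \ref{thm:vir}, the triple $(L_i,L_{-i},L_{-n})$ in \eqref{eq:defdbd1} gives $\alpha(i^2-1)=$ const, so $\alpha=0$. For $\Psi_n$ with $a=0$, the relation $[L_m,I_{-m}]\ni\frac{m^3-m}{12}C_1^{-1}$ together with the analogous triple $(L_i,I_{-i},\cdot)$ or $(L_i,L_{-i},L_{-n})$ should force $\beta=0$. For $0<a<1$ the center is only $\C C_0$, and $\Psi_n$ visibly lifts to $\widetilde\Psi_n$: its image lies in the span of the $I$'s, where no cocycle appears. I would verify this directly.
\end{itemize}

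\emph{Case $(a,b)=(0,1)$.} Here the algebra is not perfect: $I_0\notin\twab'$, since $[L_m,I_{-m}]$ produces $-(-m+m)I_0=0$. By Proposition \ref{prop:onecenter}(2), $f(\mathfrak C,\twab')=0$, but the values $f(\al,I_0)$ and $f(I_0,I_0)$ are not yet controlled. First I show that $\mathcal A,\mathcal B,\mathcal C$ are $\dt$-biderivations. Their only nonzero value is at $(I_0,I_0)$, and $I_0$ never occurs as a bracket $[x,y]$ of basis elements, while $\dt[x,f(y,z)]$ is central and so vanishes. Next, any central $\dt$-biderivation $g$ satisfies $g(x,y)=0$ whenever $x$ or $y$ lies in $\twab'$, by the argument of Proposition \ref{prop:central}. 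Since $\twab=\twab'\oplus\C I_0\oplus(\text{part of }\mathfrak C)$, $g$ is determined by its values on the complement. Some further argument (Proposition \ref{prop:onecenter}(1) for $\dt\ne0$, and direct checks) should then force $g$ to be supported only at $(I_0,I_0)$ with values in $\spanc{C_0,C_1^1,C_2^1}$, whence $g\in\spanc{\mathcal A,\mathcal B,\mathcal C}$. For non-central $f$, I use $\ov f$ on $W(0,1)$ and run the lifting analysis above for $\pi$ and $\psina 1$.

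The main obstacle I expect is the lifting analysis: showing that $\psina 0,\psina 1$ (for $\dt=1$) and $\Psi_n$ with $a=0$ (for $\dt=\frac12$) have no extensions. This requires choosing triples whose brackets hit the right cocycle with a nonconstant polynomial coefficient, handled separately for each $(a,b)$. A secondary difficulty is the bookkeeping of the central values in the non-perfect case $(0,1)$, including the value $\dt=0$.
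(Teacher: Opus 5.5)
Your reduction is the paper's: pass to $\ov f$ on $\wab$, read it off from Theorem~\ref{thm:wab}, solve for the central corrections on basis triples, and treat $(0,1)$ through $\twab=\twab'\oplus\C I_0$. The perfect cases with $\dt\ne\frac12,1$, with $\dt=1$ and $b\notin\{0,1\}$, and with $\dt=\frac12$ work as you sketch.

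\textbf{The gap.} The gap is the step you leave open for $\dt=1$, $b\in\{0,1\}$: that no nonzero combination of the $\psina{0}$ (resp.\ $\psina{1}$) lifts. Your worry about $a\ne0$ is justified. That step cannot be completed there, because it is false.

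If $b=1$, $0<a<1$, or $b=0$, $a\in(0,1)\setminus\{\frac12\}$, then $\mathfrak C=\C C_0$, and neither $[L_m,I_n]$ nor $[I_m,I_n]$ has a central term. Your own argument for $\widetilde\Psi_n$ at $\dt=\frac12$ then applies verbatim. Let $\widetilde\Psi$ agree with $\psina{b}$ on pairs $(L_i,L_j)$ and vanish on all other pairs of basis vectors.
\begin{itemize}
\item Its values lie in $\spanc{I_k\mid k\in\Z}$, so $[x,\widetilde\Psi(y,z)]$ has no central part.
\item The $C_0$-part of $[x,y]$ is killed by $\widetilde\Psi(C_0,\cdot)=\widetilde\Psi(\cdot,C_0)=0$.
\item Hence every instance of \eqref{eq:defdbd1} and \eqref{eq:defdbd2} reduces to the corresponding identity in $\wab$. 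For example, for $b=0$, $[L_i,I_{n+j+k}]+[I_{n+i+k},L_j]=(i-j)I_{n+i+j+k}=\widetilde\Psi([L_i,L_j],L_k)$.
\end{itemize}
These are biderivations of $\twab$ outside $\C\,\widetilde\pi$, so part (2) of the statement is false for these $(a,b)$. The paper's proof has the same hole. In Lemmas~\ref{lem:5.08} and~\ref{lem:5.10}, $B_n=0$ is obtained only when $a=0$, yet $f(L_i,L_j)=A[L_i,L_j]$ is then asserted for every $a$.

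\textbf{Where the statement is true.} Here you need a cocycle that pairs with the $I$-values, and you need both identities.
\begin{itemize}
\item For $a=0$, the paper compares the central part $A_{i,j}$ of $f(L_i,L_j)$ forced by \eqref{eq:defdbd1} at $(L_i,L_j,L_k)$ with the one forced by \eqref{eq:defdbd2}. As written, your plan uses only \eqref{eq:defdbd1}, and that cannot force $\lmd=0$. That identity only asks each $f(\cdot,z)$ to be a derivation, and the derivations $\psina{b}(\cdot,L_k)$ of $W(0,b)$ do lift to $\widetilde W(0,b)$.
\item For $(a,b)=(\frac12,0)$, work in degree $n$ with $\ov f=\lmd\,\psina{0}$. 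Identity \eqref{eq:defdbd1} at $(L_i,L_j,I_k)$ gives $f(L_i,I_k)=0$. Then \eqref{eq:defdbd2} at $(L_i,L_j,I_k)$ gives $\lmd[I_{n+i+j},I_k]=0$. This is impossible, since $[I_p,I_{-p-1}]=(2p+1)C_2^{\frac12,0}$.
\end{itemize}

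\textbf{The case $(0,1)$.} Your treatment of central maps needs no further argument. We have $\mathfrak C\subset\twab'$: $C_0$ comes from $[L_2,L_{-2}]$ and $[L_1,L_{-1}]$, and $[L_m,I_{-m}]=mC_1^1+C_2^1$. So $\twab=\twab'\oplus\C I_0$, and the argument of Proposition~\ref{prop:central} puts every central $f$ in $\spanc{\mathcal A,\mathcal B,\mathcal C}$.

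However, the reduction to central $f$ for $\dt\ne\frac12,1$ breaks down at $\dt=0$. There the identities only say that $f$ vanishes on $\twab'\times\twab$ and on $\twab\times\twab'$. So $f(I_0,I_0)$ may be any element, e.g.\ $L_5$, and part (1) is also false for $\dt=0$, $(a,b)=(0,1)$.

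Carried out honestly, your plan leads to a corrected statement, not to a proof of this one.
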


Fix a $\dt$-biderivation $f$ of $\twab$ and
let $\ov f$ be the $\dt$-biderivation of the quotient algebra
$\twab/\mathfrak{C}$ defined as in Proposition \ref{prop:induction}.
Note that $\twab/\mathfrak{C}\cong\wab$.
Moreover, note that if $(a,b)\ne(0,1)$, then $\twab$ is perfect and
$f\left(\mathfrak C,\twab\right)=f\left(\twab,\mathfrak C\right)=0$
by Proposition \ref{prop:onecenter}(3).
We split the proof of Theorem \ref{thm:twab} into the following seven lemmas dealing with different cases.
The first one deals with the case $\dt\ne \frac12$ or 1.

\begin{lemma}\label{lem:5.07}
Let $\dt\ne \frac12$ or 1.
Then $f$ is a linear combination of $\mathcal A,\mathcal B,\mathcal C$
if $(a,b)=(0,1)$, and $f=0$ if $(a,b)\ne(0,1)$.
\end{lemma}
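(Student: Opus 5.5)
The plan is to reduce to the quotient $\twab/\mathfrak C\cong\wab$, where Theorem \ref{thm:wab}(1) already kills everything. Then I determine which central-valued $\dt$-biderivations survive on $\twab$.

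First, let $\ov f$ be the induced $\dt$-biderivation of $\twab/\mathfrak C\cong \wab$ given by Proposition \ref{prop:induction}. Since $\dt\ne\frac12,1$ (this includes $\dt=0$), Theorem \ref{thm:wab}(1) gives $\ov f=0$. Hence $f(x,y)\in\mathfrak C$ for all $x,y$, so $f$ is a central $\dt$-biderivation of $\twab$. If $(a,b)\ne(0,1)$, then $\twab$ is perfect, and Proposition \ref{prop:central} gives $f=0$ at once.

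Now let $(a,b)=(0,1)$. By Theorem \ref{thm:deftwab}, the only central elements in this case are $C_0,C_1^1,C_2^1$, and the bracket is
\[[L_m,I_n]=-(m+n)I_{m+n}+\dt_{m+n,0}(mC_1^1+C_2^1).\]
I will compute the derived algebra. The brackets $[L_m,L_n]$ produce all $L_k$ and also $C_0$, via $[L_m,L_{-m}]$ with $m\neq 0,\pm1$. The brackets $[L_m,I_n]$ with $m+n\ne0$ produce every $I_k$ with $k\ne0$. The brackets $[L_m,I_{-m}]=mC_1^1+C_2^1$, taken for $m=0$ and $m=1$, produce both $C_1^1$ and $C_2^1$. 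On the other hand, $I_0$ never appears with nonzero coefficient, because its coefficient $-(m+n)$ vanishes when $m+n=0$. So
\[\twab'=\spanc{L_k,I_k,C_0,C_1^1,C_2^1\mid k\in\Z,\ k\ne0}\oplus\spanc{L_0},\]
which has codimension one and complement $\C I_0$. In particular $\mathfrak C\subseteq\twab'$.

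Next I run the argument of Proposition \ref{prop:central} on each slot separately. If $y=[z,z']$, then \eqref{eq:defdbd2} gives
\[f(x,y)=\dt[z,f(x,z')]+\dt[f(x,z),z']=0,\]
because the values of $f$ are central. By \eqref{eq:defdbd1}, $f(y,x)=0$ in the same way. So $f$ vanishes whenever either argument lies in $\twab'$, and therefore $f$ is determined by $f(I_0,I_0)\in\mathfrak C=\spanc{C_0,C_1^1,C_2^1}$. This shows $f\in\spanc{\mathcal A,\mathcal B,\mathcal C}$.

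Conversely, I will check that $\mathcal A,\mathcal B,\mathcal C$ really are $\dt$-biderivations. Their values are central, so the left-hand sides of \eqref{eq:defdbd1} and \eqref{eq:defdbd2} vanish. The right-hand sides vanish because $[x,y]$ and $[y,z]$ lie in $\twab'$, which has no $I_0$-component and is killed by these maps. Linear independence is clear.

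The only real obstacle is the careful identification of $\twab'$ for $(a,b)=(0,1)$. One must confirm that $C_1^1$ and $C_2^1$ are both in $\twab'$ (this uses two distinct values of $m$), and that $I_0$ is the only basis element missing from it.
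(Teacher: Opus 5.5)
Your proof follows the paper's. You reduce to a central $\delta$-biderivation via Proposition \ref{prop:induction} and Theorem \ref{thm:wab}(1), settle the perfect case with Proposition \ref{prop:central}, and for $(a,b)=(0,1)$ kill every value except $f(I_0,I_0)$. The paper does that last step by substituting $x=L_i,y=L_j$ and $x=L_i,y=I_j$ into \eqref{eq:defdbd1}, with the mirror substitutions in \eqref{eq:defdbd2}. That is your slot-wise argument over $\widetilde W(0,1)'$ written out in a basis. Your description of $\widetilde W(0,1)'$ and your check that $\mathcal A,\mathcal B,\mathcal C$ are $\delta$-biderivations are correct.

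One caveat, which the paper shares: your parenthetical ``this includes $\delta=0$'' is exactly where the argument, and the statement itself, fail. For $\delta=0$, \eqref{eq:defdbd1} and \eqref{eq:defdbd2} reduce to $f([x,y],z)=0=f(x,[y,z])$. So any bilinear map that vanishes whenever one argument lies in the derived algebra is a $0$-biderivation. On $W(0,1)$, setting $f(I_0,I_0)=L_0$ and all other basis values to $0$ gives a nonzero $0$-biderivation, because no bracket has an $I_0$-component. Hence Theorem \ref{thm:wab}(1), and likewise Proposition \ref{prop:derwab}(3), are false at $\delta=0$ for $(a,b)=(0,1)$. On $\widetilde W(0,1)$ the same construction gives a non-central $0$-biderivation outside $\spanc{\mathcal A,\mathcal B,\mathcal C}$. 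At $\delta=0$ your slot-wise argument still shows that $f$ is determined by $f(I_0,I_0)$, with no centrality needed, but that value is then arbitrary in $\widetilde W(0,1)$. So for $(a,b)=(0,1)$ the lemma and your proof hold only under the extra hypothesis $\delta\neq0$. For $(a,b)\neq(0,1)$, perfectness already handles $\delta=0$, so that case is fine.
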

\begin{proof}
 By Theorem \ref{thm:wab}(1), we see that $\ov f=0$ and
 $f$ is a central $\dt$-biderivation of $\twab$.
 If $(a,b)\ne(0,1)$, then $\twab$ is perfect and $f=0$ by Proposition \ref{prop:central}.

 Assume $(a,b)=(0,1)$.
 Apply $x=L_i, y=L_j$ to \eqref{eq:defdbd1}, and one gets
 \[12(i-j)f(L_{i+j},z)+(i^3-i)f(C_0,z)\dt_{i+j,0}=0\quad\text{ for any }
   i,j\in\Z,\ z\in\twab,\]
 which implies that $f(C_0,z)=f(L_k,z)=0$ for any $k\in\Z$ and $z\in\twab$.
 Similarly, using \eqref{eq:defdbd2}, we have $f(z,C_0)=f(z,L_k)=0$.

 Let $x=L_i,y=I_{j}$ in \eqref{eq:defdbd1} and we get
 \[-(i+j)f(I_{i+j},z)+\dt_{i+j,0}if(C_1^1,z)+\dt_{i+j,0}f(C_2^1,z)=0\quad\text{for any }i,j\in\Z,z\in\twab,\]
 which implies that $f(I_{i},z)=f(C_1^1,z)=f(C_2^1,z)=0$ for $i\ne0$.
 Similarly, by using \eqref{eq:defdbd2} we get $f(z,I_{i})=f(z,C_1^1)=f(z,C_2^1)=0$ for $i\ne0$.
 In summary, we have shown that $f(x,y)\in\mathfrak C=\spanc{C_0,C_1^1,C_2^1}$
 and $f(x,y)=0$ unless $x=y=I_0$. This proves the lemma.
\end{proof}

The following four lemmas deal with biderivations of $\twab$ (i.e., the case $\dt=1$).

\begin{lemma}\label{lem:5.08}
 If $\dt=1$ and $b=0$, then $f$ is a multiple of $\,\widetilde\pi$.
\end{lemma}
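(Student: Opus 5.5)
We plan to reduce to the quotient by the center and then lift. Since $b=0$ we have $(a,b)\ne(0,1)$, so $\twab$ is perfect and $f(\mathfrak C,\twab)=f(\twab,\mathfrak C)=0$ by Proposition \ref{prop:onecenter}(3). Moreover $\twab/\mathfrak C\cong W(a,0)$, and $\ov f$ is a biderivation of it by Proposition \ref{prop:induction}. Theorem \ref{thm:wab}(2) then gives $\ov f=\lmd\pi+\sum_n\mu_n\psina 0$. By Proposition \ref{prop:gradation} we may assume $f$ is homogeneous of degree $n$. Then $\lmd=0$ unless $n=0$, and only the single term $\mu:=\mu_n$ occurs. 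Concretely, on the generators
\[f(L_i,L_j)=\lmd\dt_{n,0}[L_i,L_j]+\mu I_{n+i+j}+c_{i,j},\]
while $f(L_i,I_j)$, $f(I_i,L_j)$ and $f(I_i,I_j)$ are central. Subtracting $\lmd\dt_{n,0}\widetilde\pi$ leaves a biderivation $g$ with $\ov g=\mu\,\psina 0$. If $\mu=0$, then $g$ is central and hence zero by Proposition \ref{prop:central}, which gives the lemma. So everything reduces to proving $\mu=0$.

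To kill $\mu$ we compare central components in \eqref{eq:defdbd1} and \eqref{eq:defdbd2}, using the central part of the brackets from Theorem \ref{thm:deftwab} with $b=0$. For $a=0$ the relevant central terms are $[L_m,I_{-m}]\ni(m^2+m)C_1^0$ and $[I_m,I_{-m}]=mC_2^0$. For $a=\frac12$ the relevant term is $[I_m,I_{-m-1}]=(2m+1)C_2^{\frac12,0}$.

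For $a=\frac12$, take $x=L_i$, $y=I_j$, $z=L_k$ in \eqref{eq:defdbd1}. The left side $g([L_i,I_j],L_k)$ is central. On the right, the term $[g(L_i,L_k),I_j]=\mu[I_{n+i+k},I_j]$ has $C_2^{\frac12,0}$-coefficient $\mu(2(n+i+k)+1)$ when $n+i+j+k+1=0$. The remaining terms vanish, since $g(I_j,L_k)$ is central and $g(\mathfrak C,\cdot)=0$. Varying $i$ with $i+k$ fixed makes the right side nonconstant, and this should force $\mu=0$.

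For $a=0$, take $x=L_i$, $y=L_j$, $z=L_k$ with $i+j+k+n=0$ in \eqref{eq:defdbd1}. The $C_1^0$-components give
\[(i-j)c_{i+j,k}=\mu\bigl((i^2+i)-(j^2+j)\bigr)=\mu(i-j)(i+j+1),\]
so $c_{m,k}=\mu(m+1)$ whenever $m+k=-n$. The mirror computation with \eqref{eq:defdbd2} yields $c_{m,k}=-\mu(k+1)$, or a similarly asymmetric formula; in addition the $C_2^0$-terms coming from $x=I$ can be compared. Playing these against each other should again give $\mu=0$.

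The main obstacle is the case $0<a<1$ with $a\ne\frac12$. There $\mathfrak C=\C C_0$ only, and none of the brackets $[L,I]$ or $[I,I]$ has a central component. The obstruction would then have to come from the $C_0$-component of \eqref{eq:defdbd1} and \eqref{eq:defdbd2}. However, $\mu\psina 0$ takes values in the span of the $I$'s, and the $C_0$-terms only arise from $[L,L]$. I would therefore first check carefully whether $\mu I_{n+i+j}$ really fails to be a biderivation there. If no contradiction appears, the statement should be restricted to $a\in\{0,\frac12\}$, or the extra maps $\widetilde\Psi$-type biderivations (of the form $L_i\times L_j\mapsto I_{n+i+j}$) must be added for generic $a$.
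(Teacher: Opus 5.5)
Your suspicion about $0<a<1$, $a\ne\frac12$, is correct. There the gap lies in the statement, not in your method: for these $a$ the lemma is false. In $\widetilde W(a,0)$ the only central term is the $C_0$-term of $[L_m,L_{-m}]$, while $[L_m,I_k]=-(a+k)I_{m+k}$ and $[I_m,I_k]=0$. Let $g_n(L_i,L_j)=I_{n+i+j}$, with $g_n$ zero on all other pairs of basis vectors, including those involving $C_0$. On $(L_i,L_j,L_k)$, \eqref{eq:defdbd1} reads $[L_i,I_{n+j+k}]+[I_{n+i+k},L_j]=(i-j)I_{n+i+j+k}=g_n([L_i,L_j],L_k)$, because $g_n(C_0,\cdot)=0$ kills the $C_0$-part of $[L_i,L_j]$. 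On every other triple of basis vectors both sides vanish, since $g_n$ is zero on pairs involving some $I_m$ or $C_0$, $C_0$ is central, and $[I_m,I_k]=0$. As $g_n$ is symmetric, \eqref{eq:defdbd2} follows, and $g_n(L_0,L_0)=I_n\ne0=\widetilde\pi(L_0,L_0)$, so $g_n$ is not a multiple of $\widetilde\pi$.

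Your reduction (subtract $\lmd\dt_{n,0}\widetilde\pi+\mu g_n$ and apply Proposition \ref{prop:central}) therefore proves the corrected statement $\mathfrak{BD}^{[1]}(\widetilde W(a,0))=\spanc{\widetilde\pi,g_n\mid n\in\Z}$ for $a\notin\{0,\frac12\}$. The paper's proof has the matching gap. It eliminates the $B_n$ only by comparing the two formulas for $A_{i,j}$, whose $B_n$-terms carry the factor $\dt_{a,0}$. This yields $B_n=0$ only for $a=0$, yet the proof concludes $f(L_i,L_j)=A[L_i,L_j]$ for every $a$. The same oversight occurs in Lemma \ref{lem:5.10}, where $\psina{1}$ lifts by zero on $C_0$ when $a\ne0$.

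For the two values where the lemma does hold, your sketches need repair.

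For $a=\frac12$ (which the paper's argument does not actually cover either), consider your equation from $(L_i,I_j,L_k)$ in \eqref{eq:defdbd1}. Its left side $g([L_i,I_j],L_k)=-(\frac12+j)g(I_{i+j},L_k)$ is an unknown central element. With $i+k$ fixed, the constraint $n+i+j+k+1=0$ fixes $j$, so the right side $\mu(2(n+i+k)+1)C_2^{\frac12,0}$ is constant, not varying. In fact this equation imposes no condition on $\mu$: it is solved by $g(I_m,L_k)=2\mu C_2^{\frac12,0}\dt_{m+k+n+1,0}$. The missing step is to apply \eqref{eq:defdbd2} to $(I_i,L_j,L_k)$ first. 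Its left side vanishes because $g(I,L)$ is central, and its right side is $(j-k)g(I_i,L_{j+k})$, so $g(I_i,L_m)=0$ for all $i,m$. The $C_2^{\frac12,0}$-component then gives $\mu(2(n+i+k)+1)=0$, and the odd factor forces $\mu=0$.

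For $a=0$ your route is exactly the paper's, but the sign matters. Applying \eqref{eq:defdbd2} to $(L_i,L_j,L_k)$ gives $C_1^0$-coefficient $+\mu(k+1)$ for $c_{m,k}$ when $m+k=-n$, not $-\mu(k+1)$. Compared with $\mu(m+1)$ from \eqref{eq:defdbd1}, this gives $\mu(m-k)=0$ for all $m+k=-n$, hence $\mu=0$. With your sign you would only obtain $\mu(2-n)=0$, which says nothing when $n=2$.
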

\begin{proof}
 By Theorem \ref{thm:wab}(2), we see that
 \[\ov f=A\pi+\sum_{n\in\Z}B_n\psina 0,\]where $A,B_n\in\C$.
 Therefore, for $i,j\in\Z$ we may write(finite sums)
 \begin{align*}
 &f(L_i,L_j)=A(i-j)L_{i+j}+\sum_{n\in\Z}B_nI_{n+i+j}+A_{i,j},\quad\quad
       f(I_i,I_j)=C_{i,j},\\
 &f(L_i,I_j)=-A(a+j)I_{i+j}+B_{i,j},\quad\quad
  f(I_i,L_j)=A(a+i)I_{i+j}+B'_{i,j},
 \end{align*}
 where $A_{i,j},B_{i,j},B'_{i,j},C_{i,j}\in\mathfrak C$.

 Let $x=L_i,y=L_j,z=L_k$ in \eqref{eq:defdbd1} and we get
 \begin{equation}\label{eqb0:aijk}
 \begin{split}
  12(i-j)A_{i+j,k}=&A(j-k)(i^3-i)C_0\dt_{i+j+k,0}-A(i-k)(j^3-j)C_0\dt_{i+j+k,0}\\
                   &+12B_{-i-j-k}(i^2+i-j^2-j)C_1^0\dt_{a,0}.
 \end{split}
 \end{equation}
 Let $j=0$ and $i\ne0$ in \eqref{eqb0:aijk} and we have
 \begin{equation}\label{eqb0:ai0k}
  A_{i,k}=\frac A{12}(i^3-i)C_0\dt_{i+j+k,0}+B_{-i-k}(i+1)C_1^0\dt_{a,0}
  \quad\text{for }i,k\in\Z, i\ne0.
 \end{equation}
 Let $j=-i\ne0$ in \eqref{eqb0:aijk} and we have
 \[A_{0,k}=B_{-k}C_1^0\dt_{a,0}\quad\text{for }k\in\Z.\]
 This says \eqref{eqb0:ai0k} stands for $i=0$ too, i.e., we have
 \begin{equation*}
  A_{i,j}=\frac A{12}(i^3-i)C_0\dt_{i+j,0}+B_{-i-j}(i+1)C_1^0\dt_{a,0}
  \quad\text{for any }i,j\in\Z.
 \end{equation*}
 In particular, if $a\ne0$, then $A_{i,j}=\frac A{12}(i^3-i)C_0\dt_{i+j,0}$.
 If $a=0$, then apply $x=L_i,y=L_j,z=L_k$ in \eqref{eq:defdbd2}, and similarly we get
 \begin{equation*}
  A_{i,j}=\frac A{12}(i^3-i)C_0\dt_{i+j,0}+B_{-i-j}(j+1)C_1^0
  \quad\text{for any }i,j\in\Z.
 \end{equation*}
 Comparing the above two equations we see that $B_n=0$ for any $n\in\Z$,
 and we still have $A_{i,j}=\frac A{12}(i^3-i)C_0\dt_{i+j,0}$.
 Hence
 \begin{equation}\label{eqb0:flilj}
  f(L_i,L_j)=A[L_i,L_j] \quad\text{for any }i,j\in\Z.
 \end{equation}

 Let $x=L_i,y=L_j,z=I_k$ in \eqref{eq:defdbd1} and we get
 \begin{equation}\label{eqb0:bijk}
  (i-j)B_{i+j,k}=-A(a+k)(i^2+i-j^2-j)C_1^0\dt_{i+j+k,0}\dt_{a,0}.
 \end{equation}
 Let $j=0$ and $i\ne0$ in \eqref{eqb0:bijk} and we have
 \begin{equation*}
  B_{i,k}=A(i^2+i)C_1^0\dt_{i+k,0}\dt_{a,0}.
 \end{equation*}
 Let $j=-i\ne0$ in \eqref{eqb0:bijk} and we get $B_{0,k}=0$. So
 \begin{equation}\label{eqb0:fliij}
  f(L_i,I_j)=A[L_i,I_j]  \quad\text{for any }i,j\in\Z.
 \end{equation}

 Let $x=I_i,y=L_j,z=L_k$ in \eqref{eq:defdbd2} and we get
 \[(j-k)B'_{i,j+k}=iAC_1^0(j^2+j-i^2+i)\dt_{k+i+j,0}\dt_{a,0},\]
 which implies that
 \[B'_{i,j}=-A(i^2-i)C_1^0\dt_{i+j,0}\quad\text{for any }i,j\in\Z.\]
 Therefore,
 \begin{equation}\label{eqb0:fiilj}
  f(I_i,L_j)=A[I_i,L_j]\quad\text{for any }i,j\in\Z.
 \end{equation}

 Let $x=L_i,y=I_j,z=I_k$ in \eqref{eq:defdbd1} and we get
 \[(a+j)C_{i+j,k}=A(a+k)\left(-jC_2^0\dt_{i+j+k,0}\dt_{a,0}
       -(2j+1)C_2^{\frac12,0}\dt_{i+j+k+1,0}\dt_{a,\frac12}\right),\]
 from which we obtain
 \begin{equation}\label{eqb0:fiiij}
   f(I_i,I_j)=C_{i,j}=iAC_2^0\dt_{i+j,0}\dt_{a,0}
   +A(2i+1)C_2^{\frac12,0}\dt_{i+j+1,0}\dt_{a,\frac12}=A[I_i,I_j]
   \quad\text{for any }i,j\in\Z.
 \end{equation}
 From \eqref{eqb0:flilj},\eqref{eqb0:fliij},\eqref{eqb0:fiilj} and \eqref{eqb0:fiiij}, we get that $f=A\widetilde\pi$.
\end{proof}

\begin{lemma}\label{lem:5.09}
 If $\dt=1, a=0, b=-1$, then $f$ is a linear combination of $\,\widetilde\pi$ and $\widetilde\Theta^{(0,-1)}$.
\end{lemma}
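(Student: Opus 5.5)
The plan follows the pattern of Lemma \ref{lem:5.08}. For $a=0,b=-1$ the algebra $\widetilde W(0,-1)$ has center spanned by $C_0$ and $C_1^{-1}$. The brackets involving them are
\[[L_m,L_n]=(m-n)L_{m+n}+\tfrac{m^3-m}{12}C_0\dt_{m+n,0},\qquad [L_m,I_n]=-(n-m)I_{m+n}+\tfrac{m^3-m}{12}C_1^{-1}\dt_{m+n,0},\]
and $[I_m,I_n]=0$. Since $(a,b)\ne(0,1)$, the algebra is perfect, so Proposition \ref{prop:onecenter}(3) gives $f(\mathfrak C,\cdot)=f(\cdot,\mathfrak C)=0$.

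By Theorem \ref{thm:wab}(2), $\ov f=A\pi+B\Theta^{(0,-1)}$ for some $A,B\in\C$. Set $g=f-A\widetilde\pi-B\widetilde\Theta^{(0,-1)}$. Before using $g$, I will check that $\widetilde\Theta^{(0,-1)}$ really is a biderivation of $\widetilde W(0,-1)$. This can be seen structurally: it equals $\widetilde\pi$ composed with the map $L_i\mapsto I_i$ in the first slot, lifted through the cocycle. Equivalently, it is a direct check on triples $(L_i,L_j,L_k)$, which is the only nontrivial case, using $\widetilde\Theta^{(0,-1)}(x,y)=[D(x),y]$ with $D=D_3$-type and the $C_1^{-1}$ cocycle matching that of $C_0$.

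Granting this, $g$ is a biderivation whose induced map $\ov g$ on $\widetilde W(0,-1)/\mathfrak C$ vanishes. So $g$ is a central biderivation of the perfect algebra $\widetilde W(0,-1)$, and Proposition \ref{prop:central} gives $g=0$. Hence $f=A\widetilde\pi+B\widetilde\Theta^{(0,-1)}$.

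The main obstacle is the verification that $\widetilde\Theta^{(0,-1)}$ satisfies \eqref{eq:defdbd1} and \eqref{eq:defdbd2}. Take $x=L_i$, $y=L_j$, $z=L_k$ in \eqref{eq:defdbd1}. The left side is
\[[L_i,(j-k)I_{j+k}+\cdots]+[(i-k)I_{i+k}+\cdots,L_j],\]
and the right side is $(i-j)\widetilde\Theta^{(0,-1)}(L_{i+j},L_k)$. The $I$-parts agree by the $\wab$ result. The central parts give
\[(j-k)\tfrac{i^3-i}{12}-(i-k)\tfrac{j^3-j}{12}=(i-j)\tfrac{(i+j)^3-(i+j)}{12}\quad\text{when } i+j+k=0,\]
which is the same identity that makes $\widetilde\pi$ a biderivation via the Virasoro cocycle. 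It therefore holds. Triples containing an $I$ vanish on both sides, since $[I,I]=0$ and $\widetilde\Theta^{(0,-1)}$ kills $I$. The case of \eqref{eq:defdbd2} is symmetric, using skew-symmetry of $\widetilde\Theta^{(0,-1)}$.

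If this direct route fails, I would instead write $f$ on basis pairs with unknown central corrections $A_{i,j},B_{i,j},B'_{i,j},C_{i,j}$, exactly as in Lemma \ref{lem:5.08}, and solve for them using the specializations $j=0$ and $j=-i$.
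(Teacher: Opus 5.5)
Your proof is correct, but for this lemma it takes a different route from the paper.

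The paper writes $f$ on basis pairs with unknown central corrections $A_{i,j},B_{i,j},B'_{i,j},C_{i,j}\in\mathfrak C$. It then solves the equations coming from \eqref{eq:defdbd1} and \eqref{eq:defdbd2} on basis triples. This \emph{derives} that the central part of $f(L_i,L_j)$ must be $\frac{i^3-i}{12}(AC_0+BC_1^{-1})\dt_{i+j,0}$, and that $f$ agrees with $A\widetilde\pi$ on the remaining pairs.

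You instead verify that the candidate lift $\widetilde\Theta^{(0,-1)}$ is a biderivation. You then kill $g=f-A\widetilde\pi-B\widetilde\Theta^{(0,-1)}$ with Proposition \ref{prop:central}, which is the argument the paper uses only in Lemma \ref{lem:5.11}.

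Your verification is sound. Its cleanest form is the following. Define $D$ by $D(L_i)=I_i$, $D(I_i)=0$, $D(\mathfrak C)=0$. Then $[D(x),y]=[x,D(y)]$ for all $x,y$, because $[L_i,I_j]=[I_i,L_j]$ in $\widetilde W(0,-1)$ (including the $C_1^{-1}$ term) and $[I_i,I_j]=0$. Hence $\widetilde\Theta^{(0,-1)}(x,\cdot)=\mathrm{ad}\,D(x)$ and $\widetilde\Theta^{(0,-1)}(\cdot,y)=-\mathrm{ad}\,D(y)$ are both derivations. Your ``structural'' sentence tacitly needs exactly this commuting property. Your direct check on $(L_i,L_j,L_k)$ covers it anyway, together with the remaining triples, including those with a central entry, where both sides vanish.

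What each route buys:
\begin{itemize}
\item Your route is shorter and avoids solving functional equations. It also proves $\widetilde\Theta^{(0,-1)}\in\mathfrak{BD}^{[1]}(\widetilde W(0,-1))$, which the reverse inclusion in Theorem \ref{thm:twab}(2) needs. The paper's proof of this lemma is only a necessity argument and never checks this.
\item The paper's route does not require knowing the correct central correction in advance. It is also the same parametrize-and-solve template the paper uses in Lemmas \ref{lem:5.08} and \ref{lem:5.10}, where it is not known beforehand which biderivations of $\wab$ lift.
\end{itemize}
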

\begin{proof}
 In this case we may write by Theorem \ref{thm:wab}(2)
 \[\ov f=A\pi+B\Theta^{(0,-1)},\]
 where $A,B\in\C$. For any $i,j\in\Z$ write
 \begin{align*}
 &f(L_i,L_j)=A(i-j)L_{i+j}+B(i-j)I_{i+j}+A_{i,j},\quad\quad
       f(I_i,I_j)=C_{i,j},\\
 &f(L_i,I_j)=A(i-j)I_{i+j}+B_{i,j},\quad\quad
  f(I_i,L_j)=A(i-j)I_{i+j}+B'_{i,j},
 \end{align*}
 where $A_{i,j},B_{i,j},B'_{i,j},C_{i,j}\in\mathfrak C$.

 Let $x=L_i,y=L_j,z=L_k$ in \eqref{eq:defdbd2} and we get
 \begin{equation*}
 \begin{split}
  12(j-k)A_{i,j+k}=&A(i-k)(j^3-j)C_0\dt_{i+j+k,0}-A(i-j)(k^3-k)C_0\dt_{i+j+k,0}\\
                   &+B(i-k)(j^3-j)C_1^{-1}\dt_{i+j+k,0}\dt_{a,0}
                   -B(i-j)(k^3-k)C_1^{-1}\dt_{i+j+k,0}\dt_{a,0}.
 \end{split}
 \end{equation*}
 Solving this equation in the same way as solving \eqref{eqb0:aijk}, we get
 \begin{equation*}
  A_{i,j}=\frac{i^3-i}{12}\left(AC_0+BC_1^{-1}\dt_{a,0}\right)\dt_{i+j,0}.
 \end{equation*}
 So
 \begin{equation}\label{eqb-1:flilj}
  f(L_i,L_j)=A\,\widetilde\pi(L_i,L_j)+B\,\widetilde\Theta^{(0,-1)}(L_i,L_j)
             \quad\text{for any }i,j\in\Z.
 \end{equation}
 Apply $x=L_i,y=I_j,z=I_k$ in \eqref{eq:defdbd1} and we get
 \[(i-j)C_{i+j,k}=0,\]
 from which we obtain $C_{i,j}=0$ for any $i,j\in\Z$, and
 \begin{equation}\label{eqb-1:fiiij}
  f(I_i,I_j)=0.
 \end{equation}

 Let $x=L_i,y=L_j,z=I_k$ in \eqref{eq:defdbd2} and we get
 \[12(k-j)B_{i,j+k}=A(i-j)(k^3-k)C_1^{-1}\dt_{i+j+k,0}
                   -A(i-k)(j^3-j)C_1^{-1}\dt_{i+j+k,0}.\]
 Solving this equation we have
 \[B_{i,j}=\frac{1}{12}A(i^3-i)C_1^{-1}\dt_{i+j,0},\]
 and hence
 \begin{equation}\label{eqb-1:fliij}
  f(L_i,I_j)=A[L_i,I_j].
 \end{equation}
 Similarly, applying $x=I_i,y=L_j,z=L_k$ in \eqref{eq:defdbd2} and we get
 \begin{equation}\label{eqb-1:fiilj}
  f(I_i,L_j)=A[I_i,L_j].
 \end{equation}
 From \eqref{eqb-1:flilj}, \eqref{eqb-1:fiiij}, \eqref{eqb-1:fliij} and \eqref{eqb-1:fiilj}
 we see that $f=A\,\widetilde\pi+B\,\widetilde\Theta^{(0,-1)}$.
\end{proof}

\begin{lemma}\label{lem:5.10}
 Let $\dt=1, b=1$. Then $f$ is a linear combination of
 $\,\widetilde\pi, \mathcal A, \mathcal B, \mathcal C$ if $a=0$,
 and $f$ is a multiple of $\,\widetilde\pi$ if $a\ne0$.
\end{lemma}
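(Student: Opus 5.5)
Following the pattern of Lemmas \ref{lem:5.08} and \ref{lem:5.09}, I will reduce to the quotient and then fix the central corrections. By Theorem \ref{thm:wab}(2) with $b=1$,
\[\ov f=A\pi+\sum_{n\in\Z}B_n\psina 1\]
(a finite sum), so for $i,j\in\Z$ I write
\begin{align*}
&f(L_i,L_j)=A(i-j)L_{i+j}+\sum_n B_n(a+i+j+n)I_{n+i+j}+A_{i,j},\qquad f(I_i,I_j)=C_{i,j},\\
&f(L_i,I_j)=-A(a+j+i)I_{i+j}+B_{i,j},\qquad f(I_i,L_j)=A(a+i+j)I_{i+j}+B'_{i,j},
\end{align*}
with $A_{i,j},B_{i,j},B'_{i,j},C_{i,j}\in\mathfrak C$. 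When $a\ne0$, $\twab$ is perfect, so $f(\mathfrak C,\cdot)=f(\cdot,\mathfrak C)=0$ by Proposition \ref{prop:onecenter}(3). When $a=0$ the algebra is not perfect ($I_0\notin\twab'$), and only part (1) of Proposition \ref{prop:onecenter} applies. In that case I first show, as in Lemma \ref{lem:5.07}, that $f$ vanishes on $C_0,C_1^1,C_2^1$: these lie in $[\twab,\twab]$, e.g.\ $C_2^1=[L_0,I_0]$. Proposition \ref{prop:onecenter}(2) only gives $f(\al,y)=0$ for $y$ in the derived algebra, so I will use the expansions $12(i-j)f(L_{i+j},z)+(i^3-i)f(C_0,z)\dt_{i+j,0}=\dots$ instead.

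Next I determine $A_{i,j}$. Substituting $x=L_i,y=L_j,z=L_k$ into \eqref{eq:defdbd1} and \eqref{eq:defdbd2} and comparing central components gives a recursion like \eqref{eqb0:aijk}. Here the $\psina1$ part produces terms $B_n(a+i+j+n)(\,\cdot\,)(mC_1^1+C_2^1)\dt_{a,0}$ coming from $[L,I]$. Solving with $j=0$ and $j=-i$, then comparing the two resulting formulas (one from each identity), should force $B_n=0$ for all $n$, at least when $a=0$. When $a\ne0$ there are no central terms to compare, so I must kill $B_n$ by another route. I will use $x=L_i,y=I_j,z=L_k$ in \eqref{eq:defdbd1}: the $I$-component of $f([L_i,I_j],L_k)$ vanishes, since $f(I,L)$ has no $\psi$ part, while $[f(L_i,L_k),I_j]$ is zero because $[I,I]$ is central. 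Comparing $I$-components then gives a contradiction unless $B_n=0$.

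I expect this to be the main obstacle: checking that $\ov f$ really corresponds to a genuine $\psina1$ on $\wab$, and so lifts, while the central extension obstructs it. It is possible that the obstruction comes only from the central terms, and that for $a\ne0$ one instead needs $\psina1$ to fail to lift; this is the delicate point, to be checked with the terms $C_1^1,C_2^1$ when $a=0$, and via perfectness when $a\neq0$. Once $B_n=0$, the recursions give $A_{i,j}=\frac A{12}(i^3-i)C_0\dt_{i+j,0}$, that is, $f(L_i,L_j)=A[L_i,L_j]$.

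The remaining components follow the same routine. From $x=L_i,y=L_j,z=I_k$ in \eqref{eq:defdbd1} I get $(i-j)B_{i+j,k}=$ (explicit central terms), and setting $j=0$ and $j=-i$ yields $f(L_i,I_j)=A[L_i,I_j]$, including the $(iC_1^1+C_2^1)$ terms when $a=0$. Symmetrically, \eqref{eq:defdbd2} with $x=I_i$ gives $f(I_i,L_j)=A[I_i,L_j]$. Finally, $x=L_i,y=I_j,z=I_k$ gives $(a+i+j)C_{i+j,k}=0$, because $[I,I]=0$ when $b=1$. Hence $C_{i,j}=0$ unless $a=0$ and $i=j=0$, and $f(I_0,I_0)$ is left as an arbitrary element of $\spanc{C_0,C_1^1,C_2^1}$. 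So $f-A\widetilde\pi$ is a combination of $\mathcal A,\mathcal B,\mathcal C$ when $a=0$, and $f=A\widetilde\pi$ when $a\ne0$.
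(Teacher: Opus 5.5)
For $a=0$ your plan is the paper's proof, and it works. Comparing the two formulas for $A_{i,j}$ obtained from \eqref{eq:defdbd1} and \eqref{eq:defdbd2} kills $B_n$ through the central term $\dt_{a,0}(mC_1^1+C_2^1)$ of $[L_m,I_{-m}]$. The values $f(C_0,I_0)$, $f(C_1^1,I_0)$, $f(C_2^1,I_0)$ and their mirrors drop out of the $k=0$ cases of the later substitutions.

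The gap is the case $a\ne0$. The substitution you propose, $x=L_i$, $y=I_j$, $z=L_k$ in \eqref{eq:defdbd1}, contains no $B_n$ at all, as your own description shows. Their only possible contribution is $\sum_nB_n(a+n+i+k)[I_{n+i+k},I_j]=0$, and $f(I_m,\cdot)$, $f(\cdot,I_m)$ do not involve them. Both sides have $I$-component $-A(a+i+j)(a+i+j+k)I_{i+j+k}$, and the central parts only give $B'_{i+j,k}=0$. There is no contradiction to extract.

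No other substitution can succeed either, because for $a\ne0$ the statement is false. When $0<a<1$ we have $\mathfrak C=\C C_0$, $[L_m,I_n]=-(a+m+n)I_{m+n}$ and $[I_m,I_n]=0$. Let $g_n$ be the bilinear map with $g_n(L_i,L_j)=(a+n+i+j)I_{n+i+j}$ and all other values on basis elements zero, including those with an argument $C_0$. For $x=L_i,y=L_j,z=L_k$ in \eqref{eq:defdbd1},
\[
[L_i,g_n(L_j,L_k)]+[g_n(L_i,L_k),L_j]
=(a+n+i+j+k)\big((a+n+i+k)-(a+n+j+k)\big)I_{n+i+j+k}.
\]
This equals $g_n([L_i,L_j],L_k)$, because the $C_0$-term of $[L_i,L_j]$ is annihilated by $g_n$. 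Every other triple of basis elements gives $0=0$, since $[I_m,I_n]=0$ and $g_n$ vanishes as soon as one argument is some $I_m$ or $C_0$. Finally, \eqref{eq:defdbd2} follows from \eqref{eq:defdbd1} because $g_n$ is symmetric.

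Thus $g_n$ is a nonzero biderivation of $\widetilde W(a,1)$ with no $L$-component, hence not a multiple of $\widetilde\pi$. So $\psina{1}$ does lift when $a\ne0$; the only obstruction is the central term of $[L_m,I_{-m}]$, which exists only for $a=0$. The paper's proof has the same hole. Its comparison of the two formulas for $A_{i,j}$ yields $B_n=0$ only for $(a,b)=(0,1)$, after which it asserts $f(L_i,L_j)=A[L_i,L_j]$ for every $a$.

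The correct statement for $a\ne0$ is $f\in\spanc{\widetilde\pi,g_n\mid n\in\Z}$, and it follows at once as in Lemma \ref{lem:5.11}. Since $\widetilde W(a,1)$ is perfect, $f-A\widetilde\pi-\sum_nB_ng_n$ is a central biderivation, hence zero by Proposition \ref{prop:central}. The ``otherwise'' clause of Theorem \ref{thm:twab}(2) must be corrected accordingly.
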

\begin{proof}
 First we mention that although $\widetilde W(0,1)$ is not perfect since
 $$I_0\notin\widetilde W(0,1)'=\left[\widetilde W(0,1),\widetilde W(0,1)\right]
   =\spanc{L_i,I_j,C_0,C_1^1,C_2^1\mid i\in\Z,j\in\Z\setminus\{0\}},$$
 we still have by Proposition \ref{prop:onecenter}(2)
 $$f\(\mathfrak C,\widetilde W(0,1)'\)=f\(\widetilde W(0,1)',\mathfrak C\)=0.$$

 By Theorem \ref{thm:wab}(2), we may write (a finite sum)
 \[\ov f=A\,\pi+\sum_{n\in\Z}B_n\psina 1,\]
 where $A,B_n\in\C$.
 Hence for any $i,j\in\Z$ we have
 \begin{align*}
 &f(L_i,L_j)=A(i-j)L_{i+j}+\sum_{n\in\Z}B_n(a+n+i+j)I_{n+i+j}+A_{i,j},\quad\quad
       f(I_i,I_j)=C_{i,j},\\
 &f(L_i,I_j)=-A(a+i+j)I_{i+j}+B_{i,j},\quad\quad
  f(I_i,L_j)=A(a+i+j)I_{i+j}+B'_{i,j},
 \end{align*}
 where $A_{i,j},B_{i,j},B'_{i,j},C_{i,j}\in\mathfrak C$.

 Let $x=L_i,y=L_j,z=L_k$ in \eqref{eq:defdbd1} and one gets
 \begin{align*}
  (i-j)A_{i+j,k}=&A(j-k)\frac{i^3-i}{12}C_0\dt_{i+j+k,0}
                    -A(i-k)\frac{j^3-j}{12}C_0\dt_{i+j+k,0}\\
        &+B_{-i-j-k}\dt_{a,0}\dt_{b,1}\((j^2-i^2)C_1^1+(j-i)C_2^1\)
        \quad\text{for any }i,j,k\in\Z.
 \end{align*}
 Solving this equation we get
 \begin{equation}\label{eqb1:aij1}
  A_{i,j}=A\frac{i^3-i}{12}C_0\dt_{i+j,0}-B_{-i-j}\dt_{a,0}\dt_{b,1}\(iC_1^1+C_2^1\).
 \end{equation}
 Similarly, putting $x=L_i,y=L_j,z=L_k$ in \eqref{eq:defdbd2} gives that
 \begin{align*}
  (j-k)A_{i,j+k}=&A(i-k)\frac{j^3-j}{12}C_0\dt_{i+j+k,0}
                    -A(i-j)\frac{k^3-k}{12}C_0\dt_{i+j+k,0}\\
        &+B_{-i-j-k}\dt_{a,0}\dt_{b,1}\((k^2-j^2)C_1^1+(k-j)C_2^1\)
        \quad\text{for any }i,j,k\in\Z,
 \end{align*}
 which implies that
 \begin{equation}\label{eqb1:aij2}
  A_{i,j}=A\frac{i^3-i}{12}C_0\dt_{i+j,0}+B_{-i-j}\dt_{a,0}\dt_{b,1}\(iC_1^1-C_2^1\).
 \end{equation}
 Comparing \eqref{eqb1:aij1} and \eqref{eqb1:aij2}, one sees that
 \[B_n=0\quad\text{for any }n\in\Z\text{ if }(a,b)=(0,1).\]
 So we have
 \begin{equation}\label{eqb1:flilj}
  A_{i,j}=\frac{1}{12}A(i^3-i)C_0\dt_{i+j,0}\quad\text{ and }\quad
  f(L_i,L_j)=A[L_i,L_j].
 \end{equation}

 Apply $x=L_i,y=L_j,z=I_k$ to \eqref{eq:defdbd1} and we get
 \[(i-j)B_{i+j,k}+\dt_{i+j,0}\dt_{k,0}\frac{i^3-i}{12}f\(C_0,I_0\)=
  A\dt_{i+j+k,0}\dt_{a,0}\dt_{b,1}\((i^2-j^2)C_1^1+(i-j)C_2^1\)
         \text{ for any }i,j,k\in\Z.\]
 Let $j=0$ and we have
 \[B_{i,k}=A\dt_{i+k,0}\dt_{a,0}\dt_{b,1}\(iC_1^1+C_2^1\)
  \quad\text{for }i,k\in\Z\text{ and } i\ne0. \]
 Take $j=-i\ne0$ and we get
 \[B_{0,k}+\dt_{k,0}\frac{i^2-1}{24}f\(C_0,I_0\)
   =A\dt_{k,0}\dt_{a,0}\dt_{b,1}C_2^1
  \quad\text{for }i,k\in\Z\text{ and } i\ne0. \]
 If $k\ne0$ then $B_{0,k}=0$.
 If $k=0$ the the above equation forces $f\(C_0,I_0\)=0$
 and then $B_{0,0}=A\dt_{a,0}\dt_{b,1}C_2^1$.
 In summary, we have
 \begin{equation}\label{eqb1:fliij}
 f\(C_0,I_0\)=0,\quad
 B_{i,j}=A\dt_{i+j,0}\dt_{a,0}\dt_{b,1}\(iC_1^1+C_2^1\)\quad\text{and}\quad
 f(L_i,I_j)=A[L_i,I_j].
 \end{equation}
 Similarly, apply $x=I_i,y=L_j,z=L_k$ to \eqref{eq:defdbd2} and we get
 \begin{equation}\label{eqb1:fiilj}
 f\(I_0, C_0\)=0,\quad
 B'_{i,j}=-A\dt_{i+j,0}\dt_{a,0}\dt_{b,1}\(jC_1^1+C_2^1\)\quad\text{and}\quad
 f(I_i,L_j)=A[I_i,L_j].
 \end{equation}

 Apply $x=L_i,y=I_j,z=I_k$ to \eqref{eq:defdbd1} and we get
 \[(a+i+j)C_{i+j,k}-\dt_{i+j,0}\dt_{a,0}\dt_{k,0}f\(iC_1^1+C_2^1,I_0\)=0
         \quad\text{for any }i,j,k\in\Z.\]
 If $a\ne0$, then $(a+i)C_{i,k}=0$ for any $i,k\in\Z$.
 If $a=0$, let $j=-i\ne0, k=0$ and we get $if\(C_1^1,I_0\)+f\(C_2^1,I_0\)=0$
 for any $i\in\Z$.
 So $f\(C_1^1,I_0\)=f\(C_2^1,I_0\)=0$,
 and then $iC_{i,k}=0$ for any $i,k\in\Z$.
 In summary we have
 \begin{equation*}\label{eqb1:cij1}
  f\(C_1^1,I_0\)=f\(C_2^1,I_0\)=0,\text{ and }(a+i)C_{i,j}=0\text{ for any }i,j\in\Z.
 \end{equation*}
 Similarly, apply $x=I_i,y=L_j,z=I_k$ to \eqref{eq:defdbd2} and one gets
 \begin{equation*}\label{eqb1:cij2}
  f\(I_0,C_1^1\)=f\(I_0,C_2^1\)=0,\text{ and }(a+j)C_{i,j}=0\text{ for any }i,j\in\Z.
 \end{equation*}
 Comparing the above two equations we conclude that
 \begin{equation}
 \begin{aligned}\label{eqb1:fiiij}
  &f\(C_1^1,I_0\)=f\(C_2^1,I_0\)=f\(I_0,C_1^1\)=f\(I_0,C_2^1\)=0;\\
  &f(I_i,I_j)=C_{i,j}=0\quad\text{unless }(a,b)=(0,1)\text{ and }(i,j)=(0,0).
 \end{aligned}
 \end{equation}
 Now for the case $(a,b)=(0,1)$, apply $x=I_0,y=L_k,z=I_0$ to \eqref{eq:defdbd2}
 and we see that $[C_{0,0},L_k]=0$ for any $k\in\Z$.
 This forces $C_{0,0}\in\mathfrak C=\spanc{C_0,C_1^1,C_2^1}$.
 Combining with \eqref{eqb1:flilj}, \eqref{eqb1:fliij}, \eqref{eqb1:fiilj}
 and \eqref{eqb1:fiiij},
 we proves the lemma.
\end{proof}

\begin{lemma}\label{lem:5.11}
 Let $\dt=1$, and suppose that $b\ne 0,1$ and $a\notin\Z$ if $b=-1$.
 Then $f$ is a multiple of $\,\widetilde\pi$.
\end{lemma}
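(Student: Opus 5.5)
Since every case outside Lemmas \ref{lem:5.08}--\ref{lem:5.10} only involves $\wab$-biderivations of the form $\C\,\pi$, I will reduce to that fact and then remove the central correction with Proposition \ref{prop:central}. No coefficient computations should be needed.

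First I pass to the quotient. Let $\ov f$ be the biderivation of $\twab/\mathfrak C\cong\wab$ given by Proposition \ref{prop:induction}. The hypotheses are $b\ne0,1$, and, when $b=-1$, $a\notin\Z$. Since $0\le a<1$, the latter means $a\ne0$, so the case $a=0,b=-1$ is excluded. Hence we are in the ``otherwise'' line of Theorem \ref{thm:wab}(2), which gives $\ov f=A\,\pi$ for some $A\in\C$.

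Next I compare $f$ with the bracket. The map $\widetilde\pi(x,y)=[x,y]$ is a biderivation of $\twab$: both identities \eqref{eq:defdbd1} and \eqref{eq:defdbd2} with $\dt=1$ are just the Jacobi identity. Moreover, $\widetilde\pi$ induces $\pi$ on the quotient. So $g=f-A\,\widetilde\pi$ is a biderivation of $\twab$ with $\ov g=0$, i.e. $g(x,y)\in\mathfrak C$ for all $x,y$. Thus $g$ is a central biderivation.

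Finally, since $b\ne1$, we have $(a,b)\ne(0,1)$, so $\twab$ is perfect, as noted after Theorem \ref{thm:deftwab}. Proposition \ref{prop:central} then gives $g=0$, hence $f=A\,\widetilde\pi$.

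The only point needing care is checking that the case hypotheses really place $\ov f$ in the one-dimensional case of Theorem \ref{thm:wab}(2), in particular that $a\notin\Z$ rules out $\Theta^{(0,-1)}$ when $b=-1$. It is also worth confirming that perfectness holds, so that no separate treatment of $f$ on central arguments is required; Proposition \ref{prop:onecenter}(3) makes this automatic anyway.
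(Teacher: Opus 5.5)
Your proposal is correct and is essentially the paper's own proof. The hypotheses (with $0\le a<1$, so $a\notin\Z$ means $a\ne0$) put $\ov f$ in the one-dimensional case $\ov f=A\pi$ of Theorem~\ref{thm:wab}(2). Then $f-A\widetilde\pi$ is a central biderivation of the perfect algebra $\twab$ (perfect because $(a,b)\ne(0,1)$), so it vanishes by Proposition~\ref{prop:central}.
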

\begin{proof}
 Clearly, $\widetilde\pi$ is a biderivation of $\twab$, and $\twab$ is perfect.
 By Theorem \ref{thm:wab}(2), we have $\ov f=A\pi$ for some $A\in\C$.
 So we have $f(x,y)=A[x,y]+\mathfrak C$ for any $x,y\in\twab$.
 Therefore, $f_1=f-A\widetilde\pi$ is a central biderivation of $\twab$,
 hence $f_1=0$ by Proposition \ref{prop:central}.
 So $f=A\widetilde\pi$.
\end{proof}

The following two lemmas deal with $\frac12$-biderivations of $\twab$.

\begin{lemma}\label{lem:5.12}
Let $\dt=\frac12$ and $b=-1$.
Then $f$ is a linear combination of $\,\widetilde\Psi_n$'s if $a\ne0$,
and $f=0$ if $a=0$.
\end{lemma}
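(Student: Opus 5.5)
The plan is to reduce to the quotient $\twab/\mathfrak C\cong\wab$ via Proposition \ref{prop:induction}, read off $\ov f$ from Theorem \ref{thm:wab}(3), and then use the Virasoro-type cocycles to kill the unwanted coefficients, exactly as in the proof of Theorem \ref{thm:vir}.

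\textbf{Setup.} Since $b=-1\neq1$, the algebra $\twab$ is perfect. Hence $f(\mathfrak C,\twab)=f(\twab,\mathfrak C)=0$ by Proposition \ref{prop:onecenter}(3). By Theorem \ref{thm:deftwab} with $b=-1$, the center is $\mathfrak C=\C C_0$ if $0<a<1$, and $\mathfrak C=\spanc{C_0,C_1^{-1}}$ if $a=0$; the $[I,I]$ brackets carry no central terms. By Proposition \ref{prop:gradation} we may assume $f$ has fixed degree $n\in\Z$. Then Theorem \ref{thm:wab}(3) gives $\ov f=a_n\Phi_n+b_n\Psi_n$, so we can write
\[f(L_i,L_j)=a_nL_{n+i+j}+b_nI_{n+i+j}+A_{i,j},\quad f(L_i,I_j)=a_nI_{n+i+j}+B_{i,j},\]
\[f(I_i,L_j)=a_nI_{n+i+j}+B'_{i,j},\quad f(I_i,I_j)=C_{i,j},\]
where $A_{i,j},B_{i,j},B'_{i,j},C_{i,j}\in\mathfrak C$.

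\textbf{Killing $a_n$, and $b_n$ when $a=0$.} For $i\neq0$, substitute $x=L_i$, $y=L_{-i}$, $z=L_{-n}$ into \eqref{eq:defdbd1} with $\dt=\frac12$.
\begin{itemize}
\item The right side is $f(2iL_0+\frac{i^3-i}{12}C_0,\,L_{-n})=2i\bigl(a_nL_0+b_nI_0+A_{0,-n}\bigr)$, using $f(C_0,\cdot)=0$.
\item On the left, the central parts $A,B,B'$ disappear inside brackets.
\item The $a_n$-terms give $\frac12a_n\bigl([L_i,L_{-i}]+[L_i,L_{-i}]\bigr)$, whose $C_0$-component is $a_n\frac{i^3-i}{12}C_0$.
\item If $a=0$, the $b_n$-terms give $\frac12b_n\bigl([L_i,I_{-i}]+[I_i,L_{-i}]\bigr)$. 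Both brackets have $C_1^{-1}$-component $\frac{i^3-i}{12}$, so the total is $b_n\frac{i^3-i}{12}C_1^{-1}$.
\end{itemize}
Comparing the $C_0$-coefficients gives $a_n\frac{i^3-i}{12}=2i\lambda$, where $\lambda$ is the $C_0$-coefficient of $A_{0,-n}$ and does not depend on $i$. Letting $i$ vary, a cubic polynomial equal to a linear one on all of $\Z\setminus\{0\}$ forces $a_n=0$. When $a=0$, the same comparison on the $C_1^{-1}$-coefficient forces $b_n=0$.

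\textbf{Conclusion.} If $a=0$, then $\ov f=0$, so $f$ is central and $f=0$ by Proposition \ref{prop:central}. If $0<a<1$, we first check that $\widetilde\Psi_n$ is a $\frac12$-biderivation of $\widetilde W(a,-1)$. This holds because there the bracket $[L_m,I_k]$ has no central term (the factor $\dt_{a,0}$ vanishes), so the verification is identical to that of $\Psi_n$ on $\wab$. Then $f-b_n\widetilde\Psi_n$ is a central $\frac12$-biderivation of the perfect algebra $\twab$, hence zero by Proposition \ref{prop:central}. Summing over degrees, $f$ is a linear combination of the $\widetilde\Psi_n$.

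The main obstacle is the bookkeeping of central components in the substitution above, in particular getting the correct sign of the $C_1^{-1}$-part of $[I_i,L_{-i}]=-[L_{-i},I_i]$. After that, the argument reduces to the polynomial-in-$i$ comparison.
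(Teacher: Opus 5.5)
Your proof is correct, and its decisive step is the same as the paper's. The paper substitutes $x=L_i,y=L_j,z=L_k$ into \eqref{eq:defdbd1} and sets $j=-i$ to obtain $A_kC_0+B_kC_1^{-1}\dt_{a,0}=0$. This is exactly what your comparison of $C_0$- and $C_1^{-1}$-components gives after the substitution $x=L_i$, $y=L_{-i}$, $z=L_{-n}$. Your sign for the central part of $[I_i,L_{-i}]$ is also right, and it is what makes the two $C_1^{-1}$-terms add rather than cancel.

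The two proofs differ in how they dispose of the central corrections. The paper keeps a non-homogeneous $\ov f=\sum_n A_n\Phi_n+B_n\Psi_n$. It then shows by further substitutions ($z=I_k$, then $x=I_i$) that $A_{i,j},B_{i,j},B'_{i,j},C_{i,j}$ all vanish. You instead reduce to a fixed degree and verify directly that $\widetilde\Psi_n$ is a $\frac12$-biderivation of $\widetilde W(a,-1)$ when $a\ne0$. You then kill $f-b_n\widetilde\Psi_n$ with Proposition \ref{prop:central}, as in the proof of Theorem \ref{thm:vir}.

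Your route is shorter. It also supplies the membership $\widetilde\Psi_n\in\mathfrak{BD}^{[\frac12]}(\widetilde W(a,-1))$, which Theorem \ref{thm:twab}(3) needs but the paper leaves implicit. The paper's route needs neither the grading reduction nor that verification, and it exhibits the vanishing of every central correction explicitly.

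As a side remark, your argument shows that the paper's second relation \eqref{eqb-1:akc1} is redundant. Linear independence of $C_0$ and $C_1^{-1}$ already gives $A_k=B_k=0$ when $a=0$.
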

\begin{proof}
 In this case we have $\ov f=\sum_{n\in\Z}A_n\Phi_n+B_n\Psi_n$ for some $A_n,B_n\in\C$.
 So for any $i,j\in\Z$ we may write
 \begin{align*}
 &f(L_i,L_j)=\sum_{n\in\Z}A_nL_{n+i+j}+\sum_{n\in\Z}B_nI_{n+i+j}+A_{i,j},\quad\quad
       f(I_i,I_j)=C_{i,j},\\
 &f(L_i,I_j)=\sum_{n\in\Z}A_nI_{n+i+j}+B_{i,j},\quad\quad
  f(I_i,L_j)=\sum_{n\in\Z}A_nI_{n+i+j}+B'_{i,j},
 \end{align*}
 where $A_{i,j},B_{i,j},B'_{i,j},C_{i,j}\in\mathfrak C$.

 Apply $x=L_i,y=L_j,z=L_k$ to \eqref{eq:defdbd1} and we get
 \begin{equation}\label{eqb-1:aijk}
 24(i-j)A_{i+j,k}=\(i^3-i-j^3+j\)\(A_{-i-j-k}C_0+B_{-i-j-k}C_1^{-1}\dt_{a,0}\dt_{b,-1}\)
         \quad\text{for any }i,j,k\in\Z.
 \end{equation}
 Take $j=-i\ne0$ and we have
 \[24A_{0,k}=\(i^2-1\)\(A_{-k}C_0+B_{-k}C_1^{-1}\dt_{a,0}\dt_{b,-1}\)
         \quad\text{for any }i,k\in\Z, i\ne0,\]
 which implies that
 \begin{equation}\label{eqb-1:akc0}
 A_{k}C_0+B_{k}C_1^{-1}\dt_{a,0}\dt_{b,-1}=A_{0,k}=0
 \quad\text{for any }k\in\Z.
 \end{equation}
 This turns \eqref{eqb-1:aijk} into $(i-j)A_{i+j,k}=0$, which implies that
 $$A_{i,k}=0\quad\text{for any }i,k\in\Z.$$

 Apply $x=L_i,y=L_j,z=I_k$ to \eqref{eq:defdbd1} and we get
 \begin{equation}\label{eqb-1:bijk}
 24(i-j)B_{i+j,k}=A_{-i-j-k}C_1^{-1}\dt_{a,0}\dt_{b,-1}\(i^3-i-j^3+j\)
         \quad\text{for any }i,j,k\in\Z.
 \end{equation}
 Let $j=-i\ne0$ and we have
 \[24B_{0,k}=A_{-k}C_1^{-1}\dt_{a,0}\dt_{b,-1}\(i^2-1\)
         \quad\text{for any }i,k\in\Z, i\ne0,\]
 which implies that
 \begin{equation}\label{eqb-1:akc1}
 A_{k}C_1^{-1}\dt_{a,0}\dt_{b,-1}=B_{0,k}=0
 \quad\text{for any }k\in\Z.
 \end{equation}
 So we have $A_k=0$ by \eqref{eqb-1:akc1} if $a=0$, and by \eqref{eqb-1:akc0} if $a\ne0$.
 Take $j=0$ in \eqref{eqb-1:bijk} and one can see that $B_{i,k}=0$ for any $i,k\in\Z$.
 Hence we have
 \begin{equation}\label{eqb-1:fliij}
 f(L_i,I_j)=0\ \text{and }f(I_i,L_j)=B'_{i,j}\quad\text{for any }i,j\in\Z.
 \end{equation}
 Then apply $x=I_i,y=L_j,z=L_k$ or $z=I_k$ to \eqref{eq:defdbd1} and we can get
 \[B'_{i,j}=C_{i,j}=0\quad\text{for any }i,j\in\Z.\]
 So we have
 \begin{equation}\label{eqb-1:fii}
 f(I_i,L_j)=f(I_i,I_j)=0.
 \end{equation}
 Moreover, we have $B_k=0$ if $a=0$ by \eqref{eqb-1:akc0}. Therefore
 \[f(L_i,L_j)=\begin{cases}
           \sum_{n\in\Z}B_nI_{n+i+j} & \text{if }a\ne0;\\
           \quad\quad    0       & \text{if }a=0.\end{cases}\]
 Combining with \eqref{eqb-1:fliij} and \eqref{eqb-1:fii} we prove the lemma.
\end{proof}

\begin{lemma}\label{lem:5.13}
 Let $\dt=\frac12$ and $b\ne-1$.
 Then $f$ is a linear combination of $\mathcal A, \mathcal B, \mathcal C$ if $(a,b)=(0,1)$, and $f=0$ if $(a,b)\ne(0,1)$.
\end{lemma}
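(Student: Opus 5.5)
By Theorem \ref{thm:wab}(3), since $b\ne-1$, the induced map $\ov f$ on $\twab/\mathfrak C\cong\wab$ vanishes. Hence $f$ is a central $\frac12$-biderivation of $\twab$. The plan is then to split according to whether $\twab$ is perfect.

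\textbf{Case $(a,b)\ne(0,1)$.} Here $\twab$ is perfect, so Proposition \ref{prop:central} gives $f=0$ immediately.

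\textbf{Case $(a,b)=(0,1)$.} Now $\mathfrak C=\spanc{C_0,C_1^1,C_2^1}$ and $\widetilde W(0,1)$ is not perfect. I will repeat the argument of Lemma \ref{lem:5.07}, which used only centrality and the defining identities, so it works for any $\dt$, in particular $\dt=\frac12$.

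1. Since $f$ is central, the left-hand side of \eqref{eq:defdbd1} vanishes.
2. Putting $x=L_i$, $y=L_j$ in \eqref{eq:defdbd1} gives
\[12(i-j)f(L_{i+j},z)+(i^3-i)\dt_{i+j,0}f(C_0,z)=0\]
for all $z$. Taking $j=0$, $i\ne0$ gives $f(L_i,z)=0$. Taking $j=-i$ with $i\ne0,\pm1$ gives $f(L_0,z)=0$ and $f(C_0,z)=0$.
3. Putting $x=L_i$, $y=I_j$ in \eqref{eq:defdbd1}, and using $[L_i,I_j]=-(i+j)I_{i+j}+\dt_{i+j,0}(iC_1^1+C_2^1)$, gives
\[-(i+j)f(I_{i+j},z)+\dt_{i+j,0}\bigl(if(C_1^1,z)+f(C_2^1,z)\bigr)=0.\]
This yields $f(I_k,z)=0$ for $k\ne0$. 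Setting $j=-i$ and letting $i$ vary gives $f(C_1^1,z)=f(C_2^1,z)=0$.
4. The symmetric computation with \eqref{eq:defdbd2} kills $f(z,L_k)$, $f(z,C_0)$, $f(z,I_k)$ for $k\ne0$, and $f(z,C_1^1)$, $f(z,C_2^1)$.

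The only possibly nonzero value is then $f(I_0,I_0)\in\mathfrak C$, so $f$ is a linear combination of $\mathcal A,\mathcal B,\mathcal C$. These three maps are indeed $\frac12$-biderivations, as the paper noted they are for every $\dt$: $I_0$ never appears in a bracket and the values are central, so both sides of \eqref{eq:defdbd1} and \eqref{eq:defdbd2} vanish.

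There is no real obstacle. The one point needing care is that in the non-perfect case Proposition \ref{prop:central} is unavailable, so the central terms $f(C_0,\cdot)$, $f(C_1^1,\cdot)$, $f(C_2^1,\cdot)$ must be eliminated explicitly. This is done by varying $i$ in the $\dt_{i+j,0}$ terms, as in steps 2 and 3.
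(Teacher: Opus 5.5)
Your proof is correct and follows essentially the paper's argument. The paper also obtains centrality from Theorem \ref{thm:wab}(3), applies Proposition \ref{prop:central} in the perfect case, and for $(a,b)=(0,1)$ eliminates every value except $f(I_0,I_0)\in\mathfrak C$ via the same substitutions into \eqref{eq:defdbd1} and \eqref{eq:defdbd2}. The only difference is cosmetic: the paper first kills $f$ on $\widetilde W(0,1)'\times\widetilde W(0,1)'$ using perfectness of the derived subalgebra and then computes only with $z=I_0$ (resp.\ $x=I_0$), whereas you rerun the computation of Lemma \ref{lem:5.07} for arbitrary $z$.
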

\begin{proof}
 By Theorem \ref{thm:wab}(3) we see that $f$ is a central $\frac12$-biderivation of $\twab$. Hence $f=0$ by Proposition \ref{prop:central} if $(a,b)\ne(0,1)$.

 In the rest of this proof we assume $(a,b)=(0,1)$.
 Note that the derived subalgebra $\widetilde W(0,1)'$ of $\widetilde W(0,1)$ is perfect, and $\widetilde W(0,1)=\C I_0\oplus\widetilde W(0,1)'$.
 So $f$ is a zero map when restricted to $\widetilde W(0,1)'$.

 Let $x=L_i,y=L_j,z=I_0$ in \eqref{eq:defdbd1} and we have
 \[12(i-j)f(L_{i+j},I_0)+\(i^3-i\)f(C_0,I_0)\dt_{i+j,0}=0.\]
 Take $j=0, i\ne0$ and we see that $f(L_{i},I_0)=0$ for all $i\ne0$.
 Take $j=-i\ne0$ and we get $f(L_0,I_0)=f(C_0,I_0)=0$.
 Similarly, let $x=I_0, y=L_i,z=L_j$ in \eqref{eq:defdbd2} and one can deduce that
 $f(I_0, C_0)=f(I_0,L_{i})=0$ for all $i\in\Z$.

 Let $x=I_i,y=L_j,z=I_0$ in \eqref{eq:defdbd1} and we have
 \[(i+j)f(I_{i+j},I_0)-\dt_{i+j,0}\(if(C_1^1,I_0)+f(C_2^1,I_0)\)=0,\]
 which gives that $f(C_1^1,I_0)=f(C_2^1,I_0)=f(I_{i},I_0)=0$ for all $i\ne0$.
 Similarly, let $x=I_0, y=L_i,z=I_j$ in \eqref{eq:defdbd2} and one can get
 $f(I_0, C_1^1)=f(I_0,C_2^1)=f(I_0,I_{i})=0$ for all $i\ne0$.

 In summary, we have shown that $f(x,y)=0$ for any $x,y\in\widetilde W(0,1)$ unless
 $x=y=I_0$, and that $f(I_0,I_0)\in\mathfrak C$. This proves the lemma.
\end{proof}

It is clear that Theorem \ref{thm:twab} follows from Lemmas \ref{lem:5.07},
\ref{lem:5.08}, \ref{lem:5.09}, \ref{lem:5.10}, \ref{lem:5.11}, \ref{lem:5.12} and \ref{lem:5.13}.

\begin{remark}
 From Theorem \ref{thm:wab} and Theorem \ref{thm:twab} we can see that $\dt$-biderivations of a Lie algebra may not be extended to its universal central extension.
 For examples, the biderivations $\psina 0$ of $W(a,0)$ and $\psina 1$ of $W(a,1)$, the $\frac12$-biderivations $\Phi_n$ of $W(a,-1)$ and $\Psi_n$ of $W(0,-1)$.
\end{remark}

\section{Applications}

In this section we determine all commuting linear maps, commutative post-Lie algebra structures and transposed $\dt$-Poisson algebra structures on the Lie algebras studied in Section 4 and Section 5 as applications of $\dt$-biderivations.
In this section we denote by $\L$ one of the Witt algebra $\W$,
the Virasoro algebra $\V$, the $W$-algebras $\wab$ or $\twab$.

\subsection{Commuting linear maps}

A linear map $\vf$ on a Lie algebra $\g$ is called \textbf{commuting} if
\[[\vf(x),x]=0\quad\text{ for any }x\in\g.\]
This condition is equivalent to
\[[\vf(x),y]=[x,\vf(y)]\quad\text{ for any }x,y\in\g.\]
Clearly, the identity map $\mathrm{id}$ is a commuting linear map of $\g$.

\begin{theorem}
 The space of all commuting linear maps of $\L$ is
 \[\mathfrak{clm}(\L)=\begin{cases}
   \spanc{\mathrm{id}, \Theta_1^{(0,-1)}} & \text{if }\L=W(0,-1);\\
   \spanc{\mathrm{id}, \widetilde\Theta_1^{(0,-1)}} & \text{if }\L=\widetilde W(0,-1);\\
   \spanc{\mathrm{id}} & \text{otherwise},\end{cases}\]
 where the linear maps $\Theta_1^{(0,-1)}$ and $\widetilde\Theta_1^{(0,-1)}$ are defined by
 \begin{align*}
 &\Theta_1^{(0,-1)}(L_i)=I_i,\quad \Theta_1^{(0,-1)}(I_i)=0;\\
 &\widetilde\Theta_1^{(0,-1)}(L_i)=I_i,\quad  \widetilde\Theta_1^{(0,-1)}(I_i)=0,\quad
    \widetilde\Theta_1^{(0,-1)}(C_0)=\widetilde\Theta_1^{(0,-1)}(C_1^{-1})=0.
 \end{align*}
\end{theorem}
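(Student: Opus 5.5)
The plan is to attach to each commuting linear map a skew-symmetric biderivation and then read off the answer from Theorems \ref{thm:witt}, \ref{thm:vir}, \ref{thm:wab}(2) and \ref{thm:twab}(2).

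\emph{Step 1: the associated biderivation.} Let $\vf$ be a commuting linear map of $\L$, so that $[\vf(x),y]=[x,\vf(y)]$ for all $x,y\in\L$. Define $f(x,y)=[\vf(x),y]$. By the commuting identity, $f(x,y)=[x,\vf(y)]=-f(y,x)$, so $f$ is skew-symmetric. For fixed $x$, the Jacobi identity gives
\[f(x,[y,z])=[\vf(x),[y,z]]=[f(x,y),z]+[y,f(x,z)],\]
which is \eqref{eq:defdbd2} with $\dt=1$. Skew-symmetry then turns this into \eqref{eq:defdbd1}. Hence $f\in\mathfrak{BD}^{[1]}(\L)$ and $f$ is skew-symmetric.

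\emph{Step 2: identify $f$.} I take the descriptions of $\mathfrak{BD}^{[1]}(\L)$ from the theorems above and discard the symmetric pieces. The maps $\psina 0$, $\psina 1$, $\mathcal A$, $\mathcal B$, $\mathcal C$ are all symmetric, since they are built from $I_{i+j+n}$, $(a+i+j+n)I_{i+j+n}$, or are supported on $(I_0,I_0)$. The maps $\pi,\widetilde\pi$ and $\Theta^{(0,-1)},\widetilde\Theta^{(0,-1)}$ are skew-symmetric. Splitting $f$ into its symmetric and skew-symmetric parts therefore gives $f=\lmd\pi+\mu\Theta^{(0,-1)}$, with $\mu=0$ unless $\L=W(0,-1)$. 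The analogous statement with tildes holds for $\widetilde W(0,-1)$.

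\emph{Step 3: identify $\vf$ up to central maps.} For $a=0,b=-1$ I will check directly that $\Theta^{(0,-1)}(x,y)=[\Theta_1^{(0,-1)}(x),y]$. Indeed,
\[[I_i,L_j]=(a+i+bj)I_{i+j}=(i-j)I_{i+j}=\Theta^{(0,-1)}(L_i,L_j),\]
and both sides vanish on pairs involving some $I$. The same check works in $\widetilde W(0,-1)$, where the $C_1^{-1}$ term comes from $[I_i,L_j]$. It follows that $[\vf(x)-\lmd x-\mu\Theta_1^{(0,-1)}(x),\,y]=0$ for all $y$. Thus $\vf-\lmd\,\mathrm{id}-\mu\Theta_1^{(0,-1)}$ takes values in $Z(\L)$. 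Since $Z(\W)=Z(\wab)=0$, this already proves the theorem for $\W$ and $\wab$. One also checks directly that $\Theta_1^{(0,-1)}$ and $\widetilde\Theta_1^{(0,-1)}$ are indeed commuting maps.

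\emph{Main obstacle.} The difficulty is the centrally extended algebras $\V$ and $\twab$. There, any linear map $\L\to Z(\L)$ is trivially commuting, since $[\vf(x),x]=0$ holds automatically when $\vf(x)$ is central. The biderivation method cannot detect such maps. I would first try to exclude a central part $\chi$ by imposing extra conditions: its vanishing on the derived algebra, or its compatibility with the grading and with the bracket. However, I expect that no such argument can succeed without an extra convention. The statement should then be read modulo central-valued linear maps, i.e.\ as a description of $\vf+\mathrm{Hom}(\L,Z(\L))$. Alternatively, one restricts to commuting maps preserving the grading with $\vf(\L')\subseteq\L'$, or simply declares central-valued maps trivial. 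I would make this precise before concluding the cases $\V$ and $\twab$.
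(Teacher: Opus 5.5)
Your route is the paper's: Steps 1--3 are correct. This includes the checks that $[\Theta_1^{(0,-1)}(x),y]=\Theta^{(0,-1)}(x,y)$ and its tilde analogue (with the $C_1^{-1}$ term). So every commuting $\vf$ satisfies $\vf-\lmd\,\mathrm{id}-\mu\Theta_1^{(0,-1)}\in\mathrm{Hom}_{\C}(\L,Z(\L))$, with $\mu=0$ unless $\L=W(0,-1)$ or $\widetilde W(0,-1)$.

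The concrete error is the assertion $Z(\wab)=0$. It fails for $(a,b)=(0,0)$, where $[L_i,I_0]=-(a+bi)I_i=0$ and $[I_i,I_0]=0$, so $Z(W(0,0))=\C I_0$. A short computation shows this is the only $\wab$ with $0\le a<1$ and nonzero center. Consequently your claim to have proved the theorem for all $\wab$ is wrong, and in fact the statement itself fails for $W(0,0)$. The linear map $\chi$ with $\chi(L_0)=I_0$ and $\chi=0$ on all other basis vectors satisfies $[\chi(x),x]=0$ but is not a multiple of $\mathrm{id}$. Your argument does prove the statement as written for $\W$ and for $\wab$ with $(a,b)\ne(0,0)$.

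Your ``main obstacle'' is not a weakness of the method but a genuine defect of the statement, and the paper's proof contains exactly this gap. It passes from ``$\{f_\vf\}$ is exhausted by $f_{\mathrm{id}}$ and $f_{\Theta_1^{(0,-1)}}$'' to ``$\mathfrak{clm}(\L)$ is spanned by $\mathrm{id}$ and $\Theta_1^{(0,-1)}$''. That step tacitly assumes $\vf\mapsto f_\vf$ is injective, whereas its kernel is all of $\mathrm{Hom}_{\C}(\L,Z(\L))$, every element of which is commuting. For instance, on $\V$ the map $L_0\mapsto C_0$, zero on the other basis vectors, is commuting. The same example works on every $\twab$, since $C_0$ is always central there.

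So you cannot ``make this precise'' in a way that rescues the literal statement for $\V$, $W(0,0)$ and the algebras $\twab$. What both your argument and the paper's actually establish is $\mathfrak{clm}(\L)=S(\L)\oplus\mathrm{Hom}_{\C}(\L,Z(\L))$, where $S(\L)$ is the span listed in the theorem. Of your proposed conventions only this quotient reading is adequate. Requiring $\vf$ to be graded with $\vf(\L')\subseteq\L'$ excludes nothing in the example above: $\V$ is perfect and $L_0\mapsto C_0$ is homogeneous of degree $0$.
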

\begin{proof}
 Let $\vf$ be a commuting linear map of $\L$,
 the bilinear map $f_\vf$ on $\L$ defined by
 \begin{equation}\label{eq:clm}
  f_\vf(x,y)=[\vf(x),y]=[x,\vf(y)]\quad\text{for any }x,y\in\L,
 \end{equation}
 is a skew-symmetric biderivation of $\L$.
 So $\{f_\vf\mid \vf\in\mathfrak{clm}(\L)\}$ is a subspace of the space $\mathcal{SSBD}(\L)$ of skew-symmetric biderivations of $\L$,
 which has dimension 2 if $\L=W(0,-1)$ or $\widetilde W(0,-1)$,
 and dimension 1 otherwise, by Theorem \ref{thm:witt}, Theorem \ref{thm:vir}, Theorem \ref{thm:wab} and Theorem \ref{thm:twab}.

 It is easy to check that $\Theta_1^{(0,-1)}$ and $\widetilde\Theta_1^{(0,-1)}$ are commuting linear maps of $W(0,-1)$ and $\widetilde W(0,-1)$ respectively.
 So the skew-symmetric biderivations constructed through \eqref{eq:clm} using all linear combinations of $\mathrm{id}, \Theta_1^{(0,-1)}$ and $\widetilde\Theta_1^{(0,-1)}$ exhaust $\mathcal{SSBD}(\L)$ in each case respectively.
 This proves the theorem.
\end{proof}

\subsection{Commutative post-Lie algebra}

In this subsection we determine all commutative post-Lie algebra structures on the algebra $\L$. Let us first recall the definition of commutative post-Lie algebra.

\begin{definition}
 Let $(\g,[\cdot,\cdot])$ be a Lie algebra.
 A \textbf{commutative post-Lie algebra structure} on $\g$
 is a bilinear product $\circ$ satisfying that
 \[x\circ y=y\circ x,\quad [x,y]\circ z=x\circ(y\circ z)-y\circ(x\circ z),\quad x\circ[y,z]=[x\circ y,z]+[y,x\circ z]\]
 for any $x,y,z\in\g$.
 The commutative post-Lie algebra structure $\circ$ on $(\g,[\cdot,\cdot])$ is called \textbf{trivial} if $x\circ y=0$ for all $x,y\in\g$.
 We also call $(\g,[\cdot,\cdot],\circ)$ a \textbf{commutative post-Lie algebra}.

\end{definition}

It is easy to check that the bilinear map $f(x,y)=x\circ y$ on $\g$ is a symmetric biderivation.
This observation enables us to classify all commutative post-Lie algebra structures on the algebra $\L$.

\begin{theorem}
 Any commutative post-Lie algebra structures on the algebra $\L$ is trivial unless $\L=\widetilde W(0,1)$.
 Any nontrivial commutative post-Lie algebra structures on the algebra $\widetilde W(0,1)$ is of the form $\Lambda_{a,b,c}, a,b,c\in\C$, defined by
 \[\Lambda_{a,b,c}(I_0+x,I_0+y)= aC_0+bC_1^1+cC_2^1
  \quad\text{for any }x,y\in\widetilde W(0,1)'.\]
\end{theorem}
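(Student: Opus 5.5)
Since $f(x,y)=x\circ y$ is a symmetric biderivation of $\L$, the plan is to read off the symmetric $1$-biderivations from the classification already done, and then impose the remaining post-Lie identity
\[[x,y]\circ z=x\circ(y\circ z)-y\circ(x\circ z).\]
The lists are Theorem \ref{thm:witt} for $\W$, Theorem \ref{thm:vir} for $\V$, Theorem \ref{thm:wab}(2) for $\wab$, and Theorem \ref{thm:twab}(2) for $\twab$.

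The first step is to determine the symmetric part of each of these spaces.

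In most cases the biderivation space is $\C\,\pi$ or $\C\,\widetilde\pi$. A symmetric multiple $A\pi$ satisfies $A[x,y]=A[y,x]$, so $2A[x,y]=0$; since these algebras are non-abelian, $A=0$.

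For $W(0,-1)$ and $\widetilde W(0,-1)$, every element $A\pi+B\Theta^{(0,-1)}$ (respectively with tildes) is skew-symmetric. Hence symmetry forces it to vanish.

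For $W(a,0)$ and $W(a,1)$, write
\[f=A\pi+\sum_n B_n\psi_n^{(a,k)}, \qquad k\in\{0,1\}.\]
Evaluating on $(L_i,I_j)$ and $(I_j,L_i)$ isolates the $\pi$-part, because the $\psi$'s vanish there. This gives $A[L_i,I_j]=A[I_j,L_i]$, and since some such bracket is nonzero, $A=0$. What remains is
\[f=\sum_n B_n\psi_n^{(a,k)},\]
which is genuinely symmetric, as $\psi^{(a,k)}_n(L_i,L_j)$ depends only on $i+j$.

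For $\widetilde W(0,1)$ the symmetric part is $\spanc{\mathcal A,\mathcal B,\mathcal C}$. It is read off from Theorem \ref{thm:twab}(2) in the same way.

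The second step tests the remaining candidates against the post-Lie identity.

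For $W(a,k)$, $k=0,1$, take $x=L_i$, $y=L_j$, $z=L_m$. The left side is $(i-j)\,L_{i+j}\circ L_m$, which lies in the span of the $I_{n+i+j+m}$. On the right side, $y\circ z$ is a combination of $I$'s, and $L_i\circ I=0$, so the right side vanishes. Hence $(i-j)\sum_n B_n c_n I_{n+i+j+m}=0$, where $c_n=1$ for $k=0$ and $c_n=a+n+i+j+m$ for $k=1$. Choosing $i\ne j$ and varying $m$ forces $B_n=0$. For $k=1$ a single coefficient $a+n+i+j+m$ can vanish, but varying $m$ avoids this.

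For $\widetilde W(0,1)$, any $f=a\mathcal A+b\mathcal B+c\mathcal C$ takes values in the centre. The product is nonzero only on $(I_0,I_0)$, so every composite $x\circ(y\circ z)$ is $0$. Moreover $[x,y]$ always lies in the derived algebra $\widetilde W(0,1)'$, so $[x,y]\circ z=0$ as well. Thus every such $f$ satisfies the identity. Decomposing $\widetilde W(0,1)=\C I_0\oplus\widetilde W(0,1)'$, which the paper already used in Lemma \ref{lem:5.13}, gives exactly the formula for $\Lambda_{a,b,c}$.

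The only step requiring care is the first one: checking that the non-symmetric generators (such as $\pi$ and $\Theta^{(0,-1)}$) cannot combine with the $\psi$'s to produce a symmetric map. Restricting to the mixed pairs $(L_i,I_j)$ and $(I_j,L_i)$ settles this directly. Everything else is a short verification.
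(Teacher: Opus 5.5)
Your proposal is correct and follows the paper's route. You read off the symmetric $1$-biderivations from the earlier classification theorems, then kill the $\Psi_n^{(a,0)}$ and $\Psi_n^{(a,1)}$ components via $[x,y]\circ z=x\circ(y\circ z)-y\circ(x\circ z)$ on triples of $L$'s (the paper uses $(L_1,L_0,L_{-1})$, and $(L_1,L_0,L_k)$ with $k$ large), and keep the central maps $\mathcal A,\mathcal B,\mathcal C$ on $\widetilde W(0,1)$, whereas the paper simply asserts the list of symmetric biderivations that you justify explicitly.
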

\begin{proof}
 Denote by $\mathcal{SBD}(\L)$ the space of symmetric biderivations of $\L$.
 By Theorem \ref{thm:witt}, Theorem \ref{thm:vir}, Theorem \ref{thm:wab} and Theorem \ref{thm:twab} we have
 \[\mathcal{SBD}(\L)=\begin{cases}
      \spanc{\psina 0\mid n\in\Z} & \text{ if }\L=W(a,0);\\
      \spanc{\psina 1\mid n\in\Z} & \text{ if }\L=W(a,1);\\
      \spanc{\mathcal A,\mathcal B,\mathcal C} & \text{ if }\L=\widetilde W(0,1);\\
      0                              & \text{ otherwise}.
  \end{cases}\]

 Let $f$ be the symmetric biderivation of $\L$ defined by
 \[f(x,y)=x\circ y\quad\text{for any }x,y\in\L.\]
 So any commutative post-Lie algebra structures on $\L$ is trivial if $\L\neq W(a,0), W(a,1)$, or $\widetilde W(0,1)$.

 If $\L=W(a,0)$,
 then we may write $f=\sum_{n=q}^p\mu_n\psina0$ for some $\mu_n\in\C$,
 $p,q\in\Z$ such that $p\ge q$.
 Therefore, we have by Theorem \ref{thm:wab}(2)
 \begin{align*}
 \sum_{n=q}^p\mu_nI_n&=f(L_1,L_{-1})=f([L_1,L_0],L_{-1})
  =[L_1,L_0]\circ L_{-1}=L_1\circ (L_0\circ L_{-1})-L_0\circ (L_1\circ L_{-1})\\
  &=f\(L_1,f(L_0, L_{-1})\)-f\(L_0,f(L_1, L_{-1})\)=0,
 \end{align*}
 which implies all $\mu_n=0$. So $f=0$.
 Hence any commutative post-Lie algebra structures on $W(a,0)$ is trivial.

 If $\L=W(a,1)$,
 then we may write $f=\sum_{n=q}^p\mu_n\psina1$ for some $\mu_n\in\C$,
 $p,q\in\Z$ such that $p\ge q$.
 Let $k\in\Z$ be such that $k>-a-p-1$, then we have by Theorem \ref{thm:wab}(2)
 \[\sum_{n=q}^p\mu_n(a+n+k+1)I_{n+k}=f(L_1,L_k)=[L_1,L_0]\circ L_{k}
   =f\(L_1,f(L_0, L_{k})\)-f\(L_0,f(L_1, L_{k})\)=0,\]
 which implies all $\mu_n=0$. So $f=0$.
 Hence any commutative post-Lie algebra structures on $W(a,1)$ is trivial.

 If $\L=\widetilde W(0,1)$,
 then we have $f(x,y)\in\mathfrak C$ for any $x,y\in\widetilde W(0,1)$,
 and one may easily check that for any $a,b,c\in\C$, $\Lambda_{a,b,c}$
 actually defines a commutative post-Lie algebra structure on $\widetilde W(0,1)$.
 This proves the theorem.
\end{proof}

\subsection{Transposed $\dt$-Poisson algebra structure}

In this subsection we determine all transposed $\dt$-Poisson algebra structures on the algebra $\L$.
In this subsection we always assume that $\dt$ is a nonzero complex number.
Let us first recall the definition of transposed $\dt$-Poisson algebra.

\begin{definition}
 A \textbf{transposed $\dt$-Poisson algebra} is a vector space $\mathcal P$ equipped with two bilinear operations $\circ$ and $[\cdot,\cdot]$ such that $(\mathcal P,\circ)$ is a commutative associative algebra, $(\mathcal P,[\cdot,\cdot])$ is a Lie algebra and these two operations satisfy the following compatibility condition
 \[\dt z\circ[x,y]=[z\circ x,y]+[x,z\circ y]\quad\text{ for any }x,y,z\in\mathcal P.\]
 A \textbf{transposed $\dt$-Poisson algebra structure} on a Lie algebra $\g$ is a bilinear operation on $\g$ making $\g$ a transposed $\dt$-Poisson algebra.
 A transposed $\dt$-Poisson algebra structure $\circ$ on $\g$ is called \textbf{trivial} if $x\circ y=0$ for any $x,y\in\g$.
\end{definition}

Let $(\g,\circ,[\cdot,\cdot])$ be a transposed $\dt$-Poisson algebra.
It is easy to check that the bilinear map $f(x,y)=x\circ y$ on $\g$ is a symmetric $\frac1\dt$-biderivation.
This fact enables us to classify all transposed $\dt$-Poisson algebra structures on the algebra $\L$.

\begin{theorem}
 (1) Any transposed $2$-Poisson algebra structure on $\W$ is of the form
 \[L_i\circ L_j=\sum_{n\in\Z}\lmd_nL_{n+i+j}\]
 for some $\lmd_n\in\C$.
 For $\dt\ne2,0$, any transposed $\dt$-Poisson algebra structure on $\W$ is trivial.\\
 (2) Any transposed $1$-Poisson algebra structure on $W(a,0)$ is of the form
 \begin{align*}
  L_i\circ L_j=\sum_{n\in\Z}\lmd_nI_{n+i+j},\quad
  L_i\circ I_j=I_i\circ I_j=0
 \end{align*}
 for some $\lmd_n\in\C$.
 For $\dt\ne1,0$, any transposed $\dt$-Poisson algebra structure on $W(a,0)$ is trivial.\\
 (3) Any transposed $1$-Poisson algebra structure on $W(a,1)$ is of the form
 \begin{align*}
  L_i\circ L_j=\sum_{n\in\Z}(a+n+i+j)\lmd_n I_{n+i+j},\quad
  L_i\circ I_j=I_i\circ I_j=0
 \end{align*}
 for some $\lmd_n\in\C$.
 For $\dt\ne1,0$, any transposed $\dt$-Poisson algebra structure on $W(a,1)$ is trivial.\\
 (4) Any transposed $2$-Poisson algebra structure on $W(a,-1)$ is of the form
 \begin{align*}
  L_i\circ L_j=\sum_{n\in\Z}\lmd_nL_{n+i+j}+\sum_{n\in\Z}\mu_nI_{n+i+j},\quad
  L_i\circ I_j=\sum_{n\in\Z}\lmd_nI_{n+i+j},\quad
   I_i\circ I_j=0
 \end{align*}
 for some $\lmd_n,\mu_n\in\C$.
 For $\dt\ne2,0$, any transposed $\dt$-Poisson algebra structure on $W(a,-1)$ is trivial.\\
 (5) Let $a\notin\Z$. Any transposed $2$-Poisson algebra structure
 on $\widetilde W(a,-1)$, is of the form
 \[L_i\circ L_j=\sum_{n\in\Z}\mu_nI_{n+i+j},\quad
  L_i\circ I_j=I_i\circ I_j=\mathfrak C\circ\widetilde W(a,-1)=0\]
 for some $\mu_n\in\C$.
 For $\dt\ne2,0$, any transposed $\dt$-Poisson algebra structure on $\widetilde W(a,-1)$ is trivial.\\
 (6) Any transposed $2$-Poisson algebra structure
 on $\widetilde W(0,1)$ is of the form
 \[(\lmd I_0+x)\circ(\mu I_0+y)=\lmd\mu\(aC_0+bC_1^1+cC_2^1\),
   \quad \lmd,\mu\in\C, x,y\in \widetilde W(0,1)',\]
 for some $a,b,c\in\C$.
 For $\dt\ne2,0$, any transposed $\dt$-Poisson algebra structure on $\widetilde W(0,1)$ is trivial.\\
 (7) Let $\L$ be any of the Virasoro algebra $\V$, the $W$-algebras $W(a,b)$ or $\widetilde W(a,b)$ except the ones listed in (2), (3), (4), (5) and (6).
 Then any transposed $\dt$-Poisson algebra structure on $\L$ is trivial.
\end{theorem}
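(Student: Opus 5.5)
The strategy is to pass from the product $\circ$ to the bilinear map $f(x,y)=x\circ y$. This map is a symmetric $\frac1\dt$-biderivation of $\L$: the compatibility condition together with commutativity of $\circ$ gives \eqref{eq:defdbd2} with parameter $\frac1\dt$, and symmetry transfers it to \eqref{eq:defdbd1}. So $f$ lies in the symmetric part of $\mathfrak{BD}^{[1/\dt]}(\L)$, which we read off from Theorems \ref{thm:witt}, \ref{thm:vir}, \ref{thm:wab} and \ref{thm:twab}. The remaining task is to impose associativity of $\circ$ and to check that the surviving maps really are commutative associative.

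First I sort by $\frac1\dt$. If $\frac1\dt\ne\frac12,1$, the only candidates are the central maps $\mathcal A,\mathcal B,\mathcal C$ on $\widetilde W(0,1)$ (Lemma \ref{lem:5.07}). In every other algebra $f=0$.

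If $\frac1\dt=1$, I discard $\pi$, $\widetilde\pi$ and $\Theta^{(0,-1)}$, $\widetilde\Theta^{(0,-1)}$, since they are skew, and project onto the symmetric part. This leaves:
\begin{enumerate}
\item[(i)] $\psina0$ on $W(a,0)$;
\item[(ii)] $\psina1$ on $W(a,1)$;
\item[(iii)] $\mathcal A,\mathcal B,\mathcal C$ on $\widetilde W(0,1)$.
\end{enumerate}

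If $\frac1\dt=\frac12$, i.e.\ $\dt=2$, the candidates are:
\begin{enumerate}
\item[(i)] $\theta_n$ on $\W$;
\item[(ii)] $\Phi_n,\Psi_n$ on $W(a,-1)$, all of which are symmetric;
\item[(iii)] $\widetilde\Psi_n$ on $\widetilde W(a,-1)$ with $a\ne0$;
\item[(iv)] $\mathcal A,\mathcal B,\mathcal C$ on $\widetilde W(0,1)$.
\end{enumerate}
The Virasoro algebra and all remaining $W(a,b)$, $\widetilde W(a,b)$ have no nonzero symmetric $\frac1\dt$-biderivation, which yields (7). I must be careful about the exceptional values here: for instance $\widetilde W(0,-1)$ gives nothing at $\dt=2$ by Lemma \ref{lem:5.12}.

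Next I verify associativity, which is where the work lies.
\begin{enumerate}
\item[(a)] Maps with values in the $I$-part that kill every $I$ or central element, namely $\psina0$, $\psina1$, $\Psi_n$, $\widetilde\Psi_n$, and $\mathcal A,\mathcal B,\mathcal C$ (which kill $\widetilde W(0,1)'$ and take central values), satisfy $(x\circ y)\circ z=0=x\circ(y\circ z)$ automatically. This is because any product lands in the span of the $I$'s and the center, which is annihilated by the next multiplication. So arbitrary finite combinations are allowed. This gives (2), (3) and (6), and also (5); for (5) I also note $\mathfrak C\circ\L=0$ by Proposition \ref{prop:onecenter}(3).
\item[(b)] On $\W$, set $\lambda(t)=\sum\lmd_n t^n$, so that $L_i\circ L_j=\sum\lmd_nL_{n+i+j}$. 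Then both $(L_i\circ L_j)\circ L_k$ and $L_i\circ(L_j\circ L_k)$ equal $\sum_{m,n}\lmd_m\lmd_nL_{m+n+i+j+k}$. Associativity therefore holds for every choice of the $\lmd_n$, which gives (1).
\item[(c)] On $W(a,-1)$, with $f=\sum\lmd_n\Phi_n+\mu_n\Psi_n$, a similar computation using $\Phi\circ\Phi$ convolutions shows associativity holds identically: the $I$-components on both sides are $\sum\lmd_m\lmd_n$-type and $\lmd\mu$-type convolutions, and they are symmetric in the indices. This gives (4).
\end{enumerate}

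I expect the main obstacle to be bookkeeping rather than any conceptual difficulty. I need to make sure that the $1$-biderivation cases admit no symmetric combination mixing $\pi$ with the others; this holds because the symmetric and skew parts split, so the symmetric part is exactly as listed. I also need to confirm the convolution identities in (c) explicitly.

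Finally, each listed product must satisfy the compatibility condition. This follows because $f$ is a $\frac1\dt$-biderivation: multiplying \eqref{eq:defdbd2} by $\dt$ gives exactly $\dt\, z\circ[x,y]=[z\circ x,y]+[x,z\circ y]$, using symmetry of $f$.
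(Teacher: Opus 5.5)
Your route is the paper's: pass to the symmetric $\frac1\dt$-biderivation $f(x,y)=x\circ y$ and read the candidates off Theorems \ref{thm:witt}, \ref{thm:vir}, \ref{thm:wab} and \ref{thm:twab}. Your explicit checks of associativity and of the compatibility identity are a real improvement, since the paper only says ``the theorem follows''.

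However, your own case list refutes part of the statement, yet you claim it proves it. You correctly keep $\mathcal A,\mathcal B,\mathcal C$ as symmetric $\frac1\dt$-biderivations of $\widetilde W(0,1)$ when $\frac1\dt\ne\frac12,1$ (Lemma \ref{lem:5.07}) and when $\frac1\dt=1$ (Theorem \ref{thm:twab}(2)). Your step (a) then shows that they give commutative associative products satisfying the compatibility identity. So $(\lmd I_0+x)\circ(\mu I_0+y)=\lmd\mu(\alpha C_0+\beta C_1^1+\gamma C_2^1)$ is a nontrivial transposed $\dt$-Poisson structure on $\widetilde W(0,1)$ for \emph{every} $\dt\ne0$. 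One can also see this directly: $[x,y]\in\widetilde W(0,1)'$ has no $I_0$-component and all products are central, so both sides of $\dt z\circ[x,y]=[z\circ x,y]+[x,z\circ y]$ vanish. This contradicts the second sentence of (6). Your ``this gives (6)'' is therefore false: you obtain the first sentence of (6) and the negation of the second. The paper's proof has the matching error: its table lists $\spanc{\mathcal A,\mathcal B,\mathcal C}$ only for $\dt=2$ and $0$ otherwise, contradicting Theorem \ref{thm:twab}(1),(2). You should have flagged that (6) must read: for every $\dt\ne0$, the structures on $\widetilde W(0,1)$ are exactly these.

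A second problem is inherited rather than yours, but it breaks your derivation of (7). Your claim that the remaining $\twab$ have no nonzero symmetric $\frac1\dt$-biderivation rests on Theorem \ref{thm:twab}(2). Its proof (Lemmas \ref{lem:5.08}, \ref{lem:5.10}) shows $B_n=0$ only when $a=0$. If $a\ne0$, the centre does not occur in $[L_m,I_n]$; if moreover $b=1$, or $b=0$ with $a\ne\frac12$, it does not occur in $[I_m,I_n]$ either. Hence $\psina 0$ (for $a\notin\{0,\frac12\}$) and $\psina 1$ (for $a\ne0$), extended by zero on $\mathfrak C$, are still symmetric biderivations of $\twab$. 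Concretely, on $\widetilde W(\frac13,0)$ take $L_i\circ L_j=I_{i+j}$, with all other products zero. This product is commutative and associative, since every product lies in the span of the $I_k$, which $\circ$ annihilates. It also satisfies $z\circ[x,y]=[z\circ x,y]+[x,z\circ y]$: for $x=L_i$, $y=L_j$, $z=L_k$ both sides equal $(i-j)I_{i+j+k}$, and in all other cases both sides are $0$. So (7) is false at $\dt=1$ for $\widetilde W(a,0)$ with $a\notin\{0,\frac12\}$ and for $\widetilde W(a,1)$ with $a\ne0$.
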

\begin{proof}
 By Theorem \ref{thm:witt}, Theorem \ref{thm:vir}, Theorem \ref{thm:wab} and Theorem \ref{thm:twab},
 the space of all symmetric $\frac1\dt$-biderivations of $\L$ equals to
 \[\begin{cases}
    \spanc{\theta_n\mid n\in\Z} &\quad\text{if }\L=\W, \dt=2;\\
    \spanc{\psina 0\mid n\in\Z} &\quad\text{if }\L=W(a,0), \dt=1;\\
    \spanc{\psina 1\mid n\in\Z} &\quad\text{if }\L=W(a,1), \dt=1;\\
    \spanc{\Phi_n,\Psi_n\mid n\in\Z} &\quad\text{if }\L=W(a,-1), \dt=2;\\
    \spanc{\widetilde\Psi_n\mid n\in\Z} &
        \quad\text{if }\L=\widetilde W(a,-1)\text{ and }a\notin\Z, \dt=2;\\
    \spanc{\mathcal A,\mathcal B,\mathcal C} &
      \quad\text{if }\L=\widetilde W(0,1), \dt=2;\\
    0 & \quad\text{otherwise.}
 \end{cases}\]
 Then the theorem follows.
\end{proof}

\bigskip

\textbf{Acknowledgments:}
C. Xu is supported by the National Natural Science Foundation of China (No. 12261077).

\end{document}

%% file: newcommands.tex
\def\({\left(}
\def\){\right)}
\def \<{\langle}
\def \>{\rangle}
\def\al{\alpha}

\def\dt{\delta}

\def\vf{\varphi}
\def\lmd{\lambda}
\def\g{\mathfrak{g}}

\def\V{\mathcal V}
\def\W{\mathcal W}
\def\L{\mathcal L}

\def\ad{\mathrm{ad}}
\newcommand{\C}{\mathbb C}

\newcommand{\Z}{\mathbb{Z}}

\newcommand{\spanc}[1]{\mathrm{span}_{\C}\left\{#1\right\}}
\newcommand{\ov}[1]{\overline{#1}}

\def\wab{W(a,b)}
\def\twab{\widetilde{W}(a,b)}

\newcommand{\der}[1]{\mathfrak{Der}(#1)}
\newcommand{\derdt}[1]{\mathfrak{Der}_\dt(#1)}
\newcommand{\dbd}[1]{\mathfrak{BD}^{[\dt]}\left(#1\right)}
\newcommand{\inn}[1]{\mathrm{Inn}(#1)}
\newcommand{\psina}[1]{\Psi_n^{(a,#1)}}